\documentclass[11pt]{amsart}
\textwidth=15.5cm
\textheight=23.4cm
\voffset=-2.0cm
\hoffset=-1.4cm
\usepackage{amssymb}
\usepackage{amsmath}
\usepackage{mathrsfs}
\usepackage{amsthm}
\newtheorem{theorem}{Theorem}[section]
\newtheorem{lemma}[theorem]{Lemma}
\newtheorem{corollary}[theorem]{Corollary}
\theoremstyle{definition}
\newtheorem{problem}{Problem}
\newtheorem*{remark}{Remark}
\numberwithin{equation}{section}
\mathchardef\hyphen="2D
\begin{document}
\title{Calkin representations for $L^{p}$}
\author{March T.~Boedihardjo}
\address{Department of Mathematics, University of California, Los Angeles, CA 90095-1555}
\email{march@math.ucla.edu}
\begin{abstract}
We identify the weak closures of the ranges of certain Calkin representations for $L^{p}$, $1<p<\infty$. As a consequence, assuming the continuum hypothesis, we show that the commutant of $B(L^{p})$, $1<p<\infty$, in its ultrapower may or may not be trivial depending on the ultrafilter. This extends a result of Farah, Phillips and Stepr\=ans.
\end{abstract}
\subjclass{Primary 46B08, 47B38}
\keywords{Calkin representation, commutant, $L^p$ space}
\maketitle
\section{Introduction}
Throughout this paper, the scalar field can be either $\mathbb{R}$ or $\mathbb{C}$. Given a Banach space $\mathcal{X}$, let $B(\mathcal{X})$ be the algebra of operators on $\mathcal{X}$ and let $K(\mathcal{X})$ be the ideal of compact operators on $\mathcal{X}$. In \cite{Calkin}, Calkin began the study of the quotient algebra $B(l^{2})/K(l^{2})$ and explicitly constructed a class of isometric representations of $B(l^{2})/K(l^{2})$ on nonseparable Hilbert spaces. In \cite{Reid}, Reid showed that certain Calkin representations are irreducible and in fact specified exactly which. Reid's result provides the first known explicit example of an irreducible representation of $B(l^{2})/K(l^{2})$.

Calkin representations for Banach spaces were studied in \cite{Boedihardjo}. It was shown that certain Calkin representations for Banach spaces are bounded below. In this paper, we identify the weak closures of the ranges of certain Calkin representations for $L^{p}$, $1<p<\infty$. As a consequence, assuming the continuum hypothesis, we show that if $1<p<\infty$ then the commutant of $B(L^{p})$ in its ultrapower may or may not be trivial depending on the ultrafilter. For $p=2$, this was proved by Farah, Phillips and Stepr\=ans \cite{Farah}.

Let $\mathcal{X}$ be a reflexive Banach space. Let $l^{\infty}(\mathcal{X})$ be the space of bounded functions from $\mathbb{N}$ to $\mathcal{X}$. Let $\mathcal{U}$ be a nonprincipal ultrafilter on $\mathbb{N}$. The {\it ultrapower} $\mathcal{X}^{\mathcal{U}}$ of $\mathcal{X}$ with respect to $\mathcal{U}$ (see \cite{Diestel}) is the quotient of the Banach space $l^{\infty}(\mathcal{X})$ by the subspace
\[c_{\mathcal{U}}(\mathcal{X})=\{(x_{n})_{n\geq 1}\in l^{\infty}(\mathcal{X}):\lim_{n,\mathcal{U}}\|x_{n}\|=0\}.\]
If $(x_{n})_{n\geq 1}$ is a bounded sequence in $\mathcal{X}$, then its image in $\mathcal{X}^{\mathcal{U}}$ is denoted by $(x_{n})_{n,\mathcal{U}}$. It is easy to see that $\displaystyle\|(x_{n})_{n,\mathcal{U}}\|=\lim_{n,\mathcal{U}}\|x_{n}\|$. If $T\in B(\mathcal{X})$ then its {\it ultrapower} $T^{\mathcal{U}}\in B(\mathcal{X}^{\mathcal{U}})$ is defined by $(x_{n})_{n,\mathcal{U}}\mapsto(Tx_{n})_{n,\mathcal{U}}$.

Consider the subspace $\{(x)_{n,\mathcal{U}}:x\in\mathcal{X}\}$ of $\mathcal{X}^{\mathcal{U}}$. We shall identify this subspace with $\mathcal{X}$. The canonical projection from $\mathcal{X}^{\mathcal{U}}$ onto $\mathcal{X}$ is given by $\displaystyle(x_{n})_{n,\mathcal{U}}\mapsto w\hyphen\lim_{n,\mathcal{U}}x_{n}$, where $\displaystyle w\hyphen\lim_{n,\mathcal{U}}x_{n}$ is the weak limit of $(x_{n})_{n\geq 1}$ through $\mathcal{U}$ which exists since $\mathcal{X}$ is reflexive. Thus, $\mathcal{X}^{\mathcal{U}}$ admits the decomposition
\[\mathcal{X}^{\mathcal{U}}=\mathcal{X}\oplus\widehat{\mathcal{X}},\]
where
\[\widehat{\mathcal{X}}=\{(x_{n})_{n,\mathcal{U}}\in\mathcal{\mathcal{X}}^{\mathcal{U}}:w\hyphen\lim_{n,\mathcal{U}}x_{n}=0\}.\]
Note that both subspaces $\{(x)_{n,\mathcal{U}}:x\in\mathcal{X}\}$ and $\widehat{\mathcal{X}}$ of $\mathcal{X}^{\mathcal{U}}$ are invariant under $T^{\mathcal{U}}$ for all $T\in B(\mathcal{X})$. For each $T\in B(\mathcal{X})$, let $\widehat{T}\in B(\widehat{\mathcal{X}})$ be the restriction of $T^{\mathcal{U}}$ to $\widehat{\mathcal{X}}$. We have
\begin{equation}\label{tdecomp}
T^{\mathcal{U}}=T\oplus\widehat{T}
\end{equation}
with respect to the decomposition $\mathcal{X}^{\mathcal{U}}=\mathcal{X}\oplus\widehat{\mathcal{X}}$.

The map $T\mapsto\widehat{T}$ defines a linear homomorphism from $B(\mathcal{X})$ into $B(\widehat{\mathcal{X}})$. It is easy to see that if $K\in K(\mathcal{X})$ then $\widehat{K}=0$. Let $\pi:B(\mathcal{X})\to B(\mathcal{X})/K(\mathcal{X})$ be the quotient map. Define a homomorphism $\rho_{\mathcal{U}}:B(\mathcal{X})/K(\mathcal{X})\to B(\widehat{\mathcal{X}})$ by
\[\rho_{\mathcal{U}}(\pi(T))=\widehat{T},\quad T\in B(\mathcal{X}).\]
The homomorphism $\rho_{\mathcal{U}}$ is the {\it Calkin representation} for $\mathcal{X}$ with respect to $\mathcal{U}$. When $\mathcal{X}$ is a Hilbert space, Calkin showed that \cite{Calkin} $\rho_{\mathcal{U}}$ is an isometric $*$-representation. The author and Johnson showed that \cite{Boedihardjo} if $\mathcal{X}$ is reflexive and has the compact approximation property (in particular, if $\mathcal{X}$ is reflexive and has a Schauder basis), then
\[\frac{1}{2}\|\pi(T)\|\leq\|\rho_{\mathcal{U}}(\pi(T))\|\leq\|\pi(T)\|,\quad T\in B(\mathcal{X}).\]
Reid showed that \cite{Reid} when $\mathcal{X}=l^{2}$, the representation $\rho_{\mathcal{U}}$ is irreducible if and only if the ultrafilter $\mathcal{U}$ is selective. Thus, if $\mathcal{U}$ is selective then the range $\{\rho_{\mathcal{U}}(\pi(T)):T\in B(l^{2})\}$ of $\rho_{\mathcal{U}}$ is dense in $B(\widehat{l^{2}})$ in the weak operator topology (WOT). Throughout this paper, $\mu$ is the Lebesgue measure on $[0,1]$ and $L^{p}=L^{p}([0,1],\mu)$. The main result of this paper is
\begin{theorem}\label{main}
Let $1<p<\infty$, $p\neq 2$. Let $\mathcal{U}$ be a selective nonprincipal ultrafilter on $\mathbb{N}$. Then there is a nontrivial subspace $\mathcal{M}$ of $\widehat{L^{p}}$ such that the WOT closure of the range of $\rho_{\mathcal{U}}$ is given by
\[\{\rho_{\mathcal{U}}(\pi(T)):T\in B(L^{p})\}^{-WOT}=\{S\in B(\widehat{L^{p}}):S\mathcal{M}\subset\mathcal{M}\}.\]
\end{theorem}
In Section 2, the space $\mathcal{M}$ described in Theorem \ref{main} is given explicitly when $p>2$. We show that it is invariant under all $\rho_{\mathcal{U}}(\pi(T))$. In Section 3, we construct a projection onto $\mathcal{M}$. In Section 4, we prove some properties of the projection constructed in Section 3. In Section 5, we prove some lemmas that uses the selectivity of $\mathcal{U}$. In Section 6, we give the proof of Theorem \ref{main} and describe the space $\mathcal{M}$ when $p<2$. In Section 7, we give some consequences of Theorem \ref{main}. In Section 8, we state a few open problems.

We begin with some preliminaries.

Let $\mathcal{X}$ be a Banach space. If $(T_{n})_{n\geq 1}$ is a bounded sequence in $B(\mathcal{X})$, then its {\it ultraproduct} $(T_{1},T_{2},\ldots)_{\mathcal{U}}$ is the operator on $\mathcal{X}^{\mathcal{U}}$ defined as
\[(T_{1},T_{2},\ldots)_{\mathcal{U}}(x_{n})_{n,\mathcal{U}}=(T_{n}x_{n})_{n,\mathcal{U}},\quad (x_{n})_{n,\mathcal{U}}\in\mathcal{X}^{\mathcal{U}}.\]
It is easy to see that $\displaystyle\|(T_{1},T_{2},\ldots)_{\mathcal{U}}\|=\lim_{n,\mathcal{U}}\|T_{n}\|$.

A Banach space $\mathcal{X}$ is {\it superreflexive} if every Banach space $\mathcal{Y}$ finitely representable in $\mathcal{X}$ is reflexive, or equivalently, if every ultrapower of $\mathcal{X}$ is reflexive. If $1<p<\infty$ then $L^{p}$ is superreflexive \cite{Stern}. Stern showed that \cite[Theorem 2.3]{Stern} a Banach space $\mathcal{X}$ is superreflexive if and only if $(\mathcal{X}^{\mathcal{U}})^{*}=(\mathcal{X}^{*})^{\mathcal{U}}$, i.e., for every bounded linear functional $\phi$ on $\mathcal{X}^{\mathcal{U}}$, there exists a unique $(x_{n}^{*})_{n,\mathcal{U}}\in(\mathcal{X}^{*})^{\mathcal{U}}$ such that
\begin{equation}\label{duality}
\phi[(x_{n})_{n,\mathcal{U}}]=\lim_{n,\mathcal{U}}x_{n}^{*}(x_{n}),\quad (x_{n})_{n,\mathcal{U}}\in\mathcal{X}^{\mathcal{U}}.
\end{equation}
Thus for $1<p<\infty$, the dual of $(L^{p})^{\mathcal{U}}$ is $(L^{q})^{\mathcal{U}}$ where $\displaystyle\frac{1}{p}+\frac{1}{q}=1$. Under this identification, the dual of $\widehat{L^{p}}$ is $\widehat{L^{q}}$.

Let $(x_{n})_{n\geq 1}$ be a sequence in a Banach space $\mathcal{X}$. Let $(y_{n})_{n\geq 1}$ be a sequence in a Banach space $\mathcal{Y}$. The sequences $(x_{n})_{n\geq 1}$ and $(y_{n})_{n\geq 1}$ are {\it equivalent} \cite{Albiac} if there is a constant $C>0$ such that
\[\frac{1}{C}\left\|\sum_{n=1}^{r}a_{n}y_{n}\right\|\leq\left\|\sum_{n=1}^{r}a_{n}x_{n}\right\|\leq C\left\|\sum_{n=1}^{r}a_{n}y_{n}\right\|,\]
for all $r\geq 1$ and scalars $a_{1},\ldots,a_{r}$.

Let $(x_{n})_{n\geq 1}$ be a sequence in a Banach space $\mathcal{X}$. We say that the summation $\displaystyle\sum_{n=1}^{\infty}x_{n}$ {\it converges unconditionally} if the summation $\displaystyle\sum_{n=1}^{\infty}\epsilon_{n}x_{n}$ converges in norm for every $\epsilon_{1},\epsilon_{2},\ldots\in\{-1,1\}$. By completeness of $\mathcal{X}$, if for every $\epsilon>0$, there exists $N\geq 1$ such that
\[\left\|\sum_{n\in F}x_{n}\right\|<\epsilon,\]
for every $F\subset\mathbb{N}\cap[N,\infty)$, then the summation $\displaystyle\sum_{n=1}^{\infty}x_{n}$ converges unconditionally.

Let $(e_{j})_{j\geq 1}$ be a sequence in a Banach space $\mathcal{X}$. A sequence $(x_{n})_{n\geq 1}$ of the form
\[x_{n}=\sum_{j=k(n)}^{k(n+1)-1}a_{j}e_{j},\]
where $1=k(1)<k(2)<\ldots$ and $a_{1},a_{2},\ldots$ are scalars, is called a {\it block sequence} of $(e_{n})_{n\geq 1}$.

Let $1\leq p<\infty$. The sequence $(u_{j})_{j\geq 1}$ in $L^{p}$ defined by $u_{1}=1$ and
\[u_{2^{k}+r}(t)=\begin{array}{cc}\begin{cases}1,&\frac{2r-2}{2^{k+1}}\leq t<\frac{2r-1}{2^{k+1}}\\-1,&\frac{2r-1}{2^{k+1}}\leq t<\frac{2r}{2^{k+1}}\\0,&\text{Otherwise}\end{cases},\end{array}\]
where $k=0,1,2\ldots$ and $r=1,\ldots,2^{k}$, is the {\it Haar basis} for $L^{p}$ \cite{Albiac}.

If $x\in\mathcal{X}$ and $x^{*}\in\mathcal{X}^{*}$, then $x\otimes x^{*}$ is the rank one operator on $\mathcal{X}$ defined by $y\mapsto x^{*}(y)x$. If $A$ is a Borel set in $[0,1]$, the complement of $A$ in $[0,1]$ is denoted by $A^{c}$ and the indicator function of $A$ is denoted by $I(A)$. If $f:[0,1]\to\mathbb{R}$ is a measurable function, then the essential support of $f$ on $[0,1]$ is denoted by $\mathrm{supp}(f)$ and the $L^{p}$ norm of $f$ is denoted by $\|f\|_{p}$. The range of an operator $T$ is denoted by $\mathrm{ran}\,T$.

Let $\mathcal{B}$ be a Banach algebra. Let $\mathcal{A}$ be a subalgebra of $\mathcal{B}$. The commutant of $\mathcal{A}$ in $\mathcal{B}$ is the subalgebra
\[\mathcal{A}'\cap\mathcal{B}=\{b\in\mathcal{B}:ab=ba\text{ for all }a\in\mathcal{A}\}\]
of $\mathcal{B}$.

A set $A$ is {\it almost contained} in another set $B$ if $A\backslash B$ is finite. An ultrafilter $\mathcal{U}$ on $\mathbb{N}$ is {\it selective} (see \cite{Farah} or \cite{Reid} where the latter used the word {\it absolute}) if
\begin{enumerate}
\item for every sequence $A_{1},A_{2},\ldots$ of sets in $\mathcal{U}$, there exists $A\in\mathcal{U}$ that is almost contained in each $A_{k}$; and
\item given any partition of $\mathbb{N}$ into disjoint finite sets $A_{1},A_{2},\ldots$, there exists $A\in\mathcal{U}$ such that $A\cap A_{k}$ is a singleton for each $k$.
\end{enumerate}
A selective nonprincipal ultrafilter exists if we assume the continuum hypothesis (see \cite{Farah}).
\section{The space $\mathcal{M}$}
For $2<p<\infty$, the space $\mathcal{M}$ described in Theorem \ref{main} is given by
\begin{equation}\label{m}
\mathcal{M}=\left\{(f_{n})_{n,\mathcal{U}}\in(L^{p})^{\mathcal{U}}:\lim_{n,\mathcal{U}}\|f_{n}\|_{2}=0\right\}.
\end{equation}
It is easy to see that $\mathcal{M}$ is a closed linear subspace of $\widehat{L^{p}}$. The space $\mathcal{M}$ is nontrivial since $(I(A_{n})/\|I(A_{n})\|_{p})_{n,\mathcal{U}}\in\mathcal{M}$ for every sequence $(A_{n})_{n\geq 1}$ of sets in $[0,1]$ such that $\mu(A_{n})\to 0$. In this section, we show that $\mathcal{M}$ is invariant under $\rho_{\mathcal{U}}(\pi(T))$ for all $T\in B(L^{p})$.

To begin, let us recall a classical result of Kadec and Pe\l{}czy\'nski.
\begin{lemma}[\cite{Kadec}]\label{Kad}
Let $2<p<\infty$. If $(f_{n})_{n\geq 1}$ is a sequence in $L^{p}$ converging to 0 weakly, then there is a subsequence $(f_{n_{k}})_{k\geq 1}$ satisfying either
\begin{enumerate}
\item $\displaystyle\lim_{k\to\infty}\|f_{n_{k}}\|_{p}=0$;
\item $(f_{n_{k}})_{k\geq 1}$ is equivalent to the canonical basis for $l^{p}$ and $\displaystyle\lim_{k\to\infty}\|f_{n_{k}}\|_{2}=0$; or
\item $(f_{n_{k}})_{k\geq 1}$ is equivalent to the canonical basis for $l^{2}$ and $\displaystyle\inf_{k\geq 1}\|f_{n_{k}}\|_{2}>0$.
\end{enumerate}
\end{lemma}
\begin{lemma}\label{2cont}
Let $2<p<\infty$. Let $T\in B(L^{p})$. Then for every $\epsilon>0$, there exists $\delta>0$ such that $\|Tf\|_{2}<\epsilon$ for every $f\in L^{p}$ with $\|f\|_{p}\leq 1$ and $\|f\|_{2}<\delta$.
\end{lemma}
\begin{proof}
Suppose by contradiction there are $f_{1},f_{2},\ldots\in L^{p}$ and $\epsilon>0$ such that $\|f_{n}\|_{p}\leq 1$, $\|f_{n}\|_{2}\to 0$ and $\|Tf_{n}\|_{2}\geq\epsilon$. Then $f_{n}\to 0$ weakly in $L^{p}$. By Lemma \ref{Kad} on $(Tf_{n})_{n\geq 1}$ and $(f_{n})_{n\geq 1}$, passing to a subsequence, we have that $(Tf_{n})_{n\geq 1}$ is equivalent to the canonical basis for $l^{2}$ and that either $\|f_{n}\|_{p}\to 0$ or $(f_{n})_{n\geq 1}$ is equivalent to the canonical basis for $l^{p}$. But this is an absurdity since $T$ is bounded and $p>2$.
\end{proof}
\begin{lemma}\label{minvariant}
Let $2<p<\infty$. The space $\mathcal{M}$ defined in (\ref{m}) is invariant under $\rho_{\mathcal{U}}(\pi(T))$ for all $T\in B(L^{p})$.
\end{lemma}
\begin{proof}
By Lemma \ref{2cont}, $T^{\mathcal{U}}\mathcal{M}\subset\mathcal{M}$ for all $T\in B(L^{p})$. Since $\mathcal{M}\subset\widehat{L^{p}}$ and $\rho_{\mathcal{U}}(\pi(T))=\widehat{T}$ is the restriction of $T^{\mathcal{U}}$ to $\widehat{L^{p}}$, it follows that $\rho_{\mathcal{U}}(\pi(T))\mathcal{M}\subset\mathcal{M}$ for all $T\in B(L^{p})$.
\end{proof}
Lemma \ref{minvariant} proves one direction of Theorem \ref{main}, namely,
\[\{\rho_{\mathcal{U}}(\pi(T)):T\in B(L^{p})\}^{-WOT}\subset\{S\in B(\widehat{L^{p}}):S\mathcal{M}\subset\mathcal{M}\}.\]
This holds for all nonprincipal ultrafilter $\mathcal{U}$. But to prove the other direction, we need to assume that $\mathcal{U}$ is selective.
\section{Projection onto $\mathcal{M}$}
In this section, we construct a projection onto $\mathcal{M}$. This is needed in the proof of Theorem \ref{main}.
\begin{lemma}\label{limitexists}
Let $1<p<\infty$. Let $(f_{n})_{n\geq 1}$ be a sequence in $L^{p}$ satisfying $\displaystyle\sup_{n\geq 1}\|f_{n}\|_{p}<\infty$. Then
\begin{enumerate}
\item $(f_{n}I(|f_{n}|>r))_{n,\mathcal{U}}$ converges in norm to an element in $(L^{p})^{\mathcal{U}}$ as $r\to\infty$; and
\item \[\lim_{r\to\infty}(rI(|f_{n}|>r))_{n,\mathcal{U}}=0.\]
\end{enumerate}
\end{lemma}
\begin{proof}
Recall that $L^{p}=L^{p}([0,1],\mu)$ where $\mu$ is the Lebesgue measure on $[0,1]$. Let $\nu_{n}$ be the pushforward probability measure on $\mathbb{C}$ under $f_{n}$ of $\mu$. Since $\displaystyle\sup_{n\geq 1}\|f_{n}\|_{p}<\infty$, the measures $\nu_{n}$ are uniformly tight, i.e., for every $\epsilon>0$, there exists a compact set $\mathcal{K}$ in $\mathbb{C}$ such that $\nu_{n}(\mathcal{K})\geq1-\epsilon$ for all $n\geq 1$. Recall that every uniformly tight sequence of probability measures on $\mathbb{C}$ has a subsequence that converges weakly to a probability measure on $\mathbb{C}$. Since the measures $\nu_{n}$ are uniformly tight, there exists a weak limit $\nu$ of $(\nu_{n})_{n\geq 1}$ through $\mathcal{U}$. Note that
\begin{equation}\label{pint}
\int|z|^{p}\,d\nu(z)\leq\lim_{n,\mathcal{U}}\int|z|^{p}\,d\nu_{n}(z)=\lim_{n,\mathcal{U}}\int|f_{n}|^{p}\,d\mu<\infty.
\end{equation}
So for every $r_{2}>r_{1}>0$,
\begin{align*}
&\|(f_{n}I(|f_{n}|>r_{1}))_{n,\mathcal{U}}-(f_{n}I(|f_{n}|>r_{2}))_{n,\mathcal{U}}\|^{p}\\=&
\lim_{n,\mathcal{U}}\|f_{n}I(r_{1}<|f_{n}|\leq r_{2})\|_{p}^{p}\\=&
\lim_{n,\mathcal{U}}\int_{r_{1}<|z|\leq r_{2}}|z|^{p}\,d\nu_{n}(z)\\\leq&\int_{r_{1}\leq|z|\leq r_{2}}|z|^{p}\,d\nu(z).
\end{align*}
Therefore,
\[\lim_{r_{1},r_{2}\to\infty}\|(f_{n}I(f_{n}>r_{1}))_{n,\mathcal{U}}-(f_{n}I(f_{n}>r_{2}))_{n,\mathcal{U}}\|=0.\]
So $(f_{n}I(|f_{n}|>r))_{n,\mathcal{U}}$ converges in norm to an element in $(L^{p})^{\mathcal{U}}$ as $r\to\infty$. This proves the first assertion.

For the second assertion, observe that
\[\lim_{n,\mathcal{U}}\mu(|f_{n}|>r)=\lim_{n,\mathcal{U}}\nu_{n}(\{z\in\mathbb{C}:|z|>r\})\leq\nu(\{z\in\mathbb{C}:|z|\geq r\}).\]
So
\begin{align*}
&\limsup_{r\to\infty}\|(rI(|f_{n}|>r)_{n,\mathcal{U}}\|^{p}\\=&\limsup_{r\to\infty}r^{p}\lim_{n,\mathcal{U}}\mu(|f_{n}|>r)\\\leq&
\limsup_{r\to\infty}r^{p}\nu(\{z\in\mathbb{C}:|z|\geq r\})\\\leq&\limsup_{r\to\infty}\int_{|z|\geq r}|z|^{p}\,d\nu(z)=0,
\end{align*}
where the last equality follows from (\ref{pint}).
\end{proof}
\begin{lemma}\label{linear}
Let $1<p<\infty$. Let $(f_{n})_{n\geq 1}$ and $(g_{n})_{n\geq 1}$ be sequences of nonnegative functions in $L^{p}$ such that $\displaystyle\sup_{n\geq 1}\|f_{n}\|_{p}<\infty$ and $\displaystyle\sup_{n\geq 1}\|g_{n}\|_{p}<\infty$. Then
\begin{align*}
&\lim_{r\to\infty}((f_{n}+g_{n})I(f_{n}+g_{n}>r))_{n,\mathcal{U}}\\=&
\lim_{r\to\infty}(f_{n}I(f_{n}>r))_{n,\mathcal{U}}+\lim_{r\to\infty}(g_{n}I(g_{n}>r))_{n,\mathcal{U}}.
\end{align*}
\end{lemma}
\begin{proof}
For every $r>0$,
\begin{equation}\label{superadditive}
(f_{n}+g_{n})I(f_{n}+g_{n}>r)\geq f_{n}I(f_{n}>r)+g_{n}I(g_{n}>r).
\end{equation}
Let $0<s<r$. Then
\[I(f_{n}+g_{n}>r)\leq I(f_{n}>s)+I(f_{n}\leq s\text{ and }g_{n}>r-s).\]
So
\[f_{n}I(f_{n}+g_{n}>r)\leq f_{n}I(f_{n}>s)+sI(g_{n}>r-s).\]
Interchanging the roles of $f_{n}$ and $g_{n}$, we obtain
\[g_{n}I(f_{n}+g_{n}>r)\leq g_{n}I(g_{n}>s)+sI(f_{n}>r-s).\]
Therefore,
\begin{align*}
&(f_{n}+g_{n})I(f_{n}+g_{n}>r)\\\leq&f_{n}I(f_{n}>s)+g_{n}I(g_{n}>s)+s(I(f_{n}>r-s)+I(g_{n}>r-s)).
\end{align*}
Combining this with (\ref{superadditive}), we have
\begin{align*}
0\leq&(f_{n}+g_{n})I(f_{n}+g_{n}>r)-f_{n}I(f_{n}>r)-g_{n}I(g_{n}>r)\\\leq&f_{n}I(f_{n}>s)-f_{n}I(f_{n}>r)+g_{n}I(g_{n}>s)-g_{n}I(g_{n}>r)\\&
+s(I(f_{n}>r-s)+I(g_{n}>r-s)).
\end{align*}
Thus,
\begin{align}\label{additivebound}
&\lim_{n,\mathcal{U}}\|(f_{n}+g_{n})I(f_{n}+g_{n}>r)-f_{n}I(f_{n}>r)-g_{n}I(g_{n}>r)\|\\\leq&
\lim_{n,\mathcal{U}}\|f_{n}I(f_{n}>s)-f_{n}I(f_{n}>r)\|+\lim_{n,\mathcal{U}}\|g_{n}I(g_{n}>s)-g_{n}I(g_{n}>r)\|\nonumber\\&+
\lim_{n,\mathcal{U}}\|s(I(f_{n}>r-s)+I(g_{n}>r-s))\|.\nonumber
\end{align}
Since $\displaystyle\sup_{n\geq 1}\|f_{n}\|_{p}<\infty$ and $\displaystyle\sup_{n\geq 1}\|g_{n}\|_{p}<\infty$, by Markov's inequality,
\[\lim_{r\to\infty}\|(I(f_{n}>r)+I(g_{n}>r))_{n,\mathcal{U}}\|=0.\]
Thus,
\[\lim_{r\to\infty}\lim_{n,\mathcal{U}}\|s(I(f_{n}>r-s)+I(g_{n}>r-s))\|=0,\]
for every $s>0$. So by (\ref{additivebound}),
\begin{align*}
&\limsup_{r\to\infty}\|((f_{n}+g_{n})I(f_{n}+g_{n}>r))_{n,\mathcal{U}}\\
&-(f_{n}I(f_{n}>r))_{n,\mathcal{U}}-(g_{n}I(g_{n}>r))_{n,\mathcal{U}}\|\\\leq&
\limsup_{r\to\infty}\|(f_{n}I(f_{n}>s))_{n,\mathcal{U}}-(f_{n}I(f_{n}>r))_{n,\mathcal{U}}\|\\&+\limsup_{r\to\infty}\|(g_{n}I(g_{n}>s))_{n,\mathcal{U}}-
(g_{n}I(g_{n}>r))_{n,\mathcal{U}}\|,
\end{align*}
for every $s>0$. By Lemma \ref{limitexists}, taking $s\to\infty$, we have
\begin{align*}
&\limsup_{r\to\infty}\|((f_{n}+g_{n})I(f_{n}+g_{n}>r))_{n,\mathcal{U}}\\&-(f_{n}I(f_{n}>r))_{n,\mathcal{U}}-(g_{n}I(g_{n}>r))_{n,\mathcal{U}}\|=0.
\end{align*}
Thus the result follows.
\end{proof}
The following lemma is elementary but we include its proof for convenience.
\begin{lemma}\label{linearcriteria}
Let $V$ and $W$ be vector spaces over $\mathbb{R}$. Let $V^{+}$ be a subset of $V$ such that $x_{1}+x_{2}\in V^{+}$ for every $x_{1},x_{2}\in V^{+}$. Let $+,-:V\to V^{+}$ be functions such that $x=x^{+}-x^{-}$ for every $x\in V$. Let $\phi:V\to W$ be a map such that
\[\phi(ax)=a\phi(x),\quad x\in V,\,a\in\mathbb{R},\]
\[\phi(x)=\phi(x^{+})-\phi(x^{-}),\quad x\in V,\]
and
\[\phi(x_{1}+x_{2})=\phi(x_{1})+\phi(x_{2}),\quad x_{1},x_{2}\in V^{+}.\]
Then $\phi$ is linear.
\end{lemma}
\begin{proof}
Let $x_{1},x_{2}\in V^{+}$. Then
\[x_{1}-x_{2}=(x_{1}-x_{2})^{+}-(x_{1}-x_{2})^{-}.\]
$\Longrightarrow$
\[(x_{1}-x_{2})^{+}+x_{2}=(x_{1}-x_{2})^{-}+x_{1}.\]
$\Longrightarrow$
\[\phi((x_{1}-x_{2})^{+})+\phi(x_{2})=\phi((x_{1}-x_{2})^{-})+\phi(x_{1}).\]
$\Longrightarrow$
\[\phi((x_{1}-x_{2})^{+})-\phi((x_{1}-x_{2})^{-})=\phi(x_{1})-\phi(x_{2}).\]
$\Longrightarrow$
\[\phi(x_{1}-x_{2})=\phi(x_{1})-\phi(x_{2}).\]
Therefore,
\begin{equation}\label{minus}
\phi(x_{1}-x_{2})=\phi(x_{1})-\phi(x_{2}),\quad x_{1},x_{2}\in V^{+}.
\end{equation}
For $y_{1},y_{2}\in V$,
\[y_{1}+y_{2}=(y_{1}^{+}+y_{2}^{+})-(y_{1}^{-}+y_{2}^{-}).\]
Since $y_{1}^{+}+y_{2}^{+}$ and $y_{1}^{-}+y_{2}^{-}$ are in $V^{+}$, by (\ref{minus}),
\begin{align*}
&\phi(y_{1}+y_{2})\\=&\phi(y_{1}^{+}+y_{2}^{+})-\phi(y_{1}^{-}+y_{2}^{-})\\=&\phi(y_{1}^{+})+\phi(y_{2}^{+})-\phi(y_{1}^{-})-\phi(y_{2}^{-})\\=&
\phi(y_{1}^{+})-\phi(y_{1}^{-})+\phi(y_{2}^{+})-\phi(y_{2}^{-})=\phi(y_{1})+\phi(y_{2}).
\end{align*}
Thus, the result follows.
\end{proof}
\begin{lemma}\label{rdefined}
Let $1<p<\infty$. The map $R_{p}:(L^{p})^{\mathcal{U}}\to(L^{p})^{\mathcal{U}}$,
\[R_{p}[(f_{n})_{n,\mathcal{U}}]=\lim_{r\to\infty}(f_{n}I(|f_{n}|>r))_{n,\mathcal{U}},\quad (f_{n})_{n,\mathcal{U}}\in(L^{p})^{\mathcal{U}}\]
is a well defined operator on $(L^{p})^{\mathcal{U}}$.
\end{lemma}
\begin{proof}
We first treat the case when the scalar field is $\mathbb{R}$. Define $S:l^{\infty}(L^{p})\to(L^{p})^{\mathcal{U}}$ by
\[S[(f_{n})_{n\geq 1}]=\lim_{r\to\infty}(f_{n}I(|f_{n}|>r))_{n,\mathcal{U}}.\]
Take $V=l^{\infty}(L^{p})$, $W=(L^{p})^{\mathcal{U}}$, $V^{+}=\{(f_{n})_{n\geq 1}\in l^{\infty}(L^{p}):f_{n}\geq 0\text{ for all }n\geq 1\}$, $[(f_{n})_{n\geq 1}]^{+}=(f_{n}^{+})_{n\geq 1}$ and $[(f_{n})_{n\geq 1}]^{-}=(f_{n}^{-})_{n\geq 1}$, where $f_{n}^{+}$ and $f_{n}^{-}$ are the positive and negative parts of $f_{n}$, respectively. By Lemmas \ref{linear} and \ref{linearcriteria}, it follows that $S$ is linear. Obviously $S$ is bounded.

Observe that if $\displaystyle\lim_{n,\mathcal{U}}\|f_{n}\|_{p}=0$ then $S[(f_{n})_{n\geq 1}]=0$. Thus $R_{p}$ is a well defined operator on $(L^{p})^{\mathcal{U}}$.

We now treat the case when the scalar field is $\mathbb{C}$. Since $R_{p}$ is a well defined operator when the scalar field is $\mathbb{R}$, it suffices to prove that if we write $f_{n}=f_{n}^{(1)}+if_{n}^{(2)}$ where $f_{n}^{(1)},f_{n}^{(2)}$ take real values, then
\begin{align}\label{compdecomp}
&\lim_{r\to\infty}(f_{n}I(|f_{n}|>r))_{n,\mathcal{U}}\nonumber\\=&
\lim_{r\to\infty}(f_{n}^{(1)}I(|f_{n}^{(1)}|>r))_{n,\mathcal{U}}+i\lim_{r\to\infty}(f_{n}^{(2)}I(|f_{n}^{(2)}|>r))_{n,\mathcal{U}}.
\end{align}
Observe that
\begin{align*}
&\lim_{r\to\infty}\|(f_{n}^{(1)}I(|f_{n}|>r))_{n,\mathcal{U}}-(f_{n}^{(1)}I(|f_{n}^{(1)}|>r))_{n,\mathcal{U}}\|\\=&
\lim_{r\to\infty}\|(|f_{n}^{(1)}|I(|f_{n}|>r\text{ and }|f_{n}^{(1)}|\leq r))_{n,\mathcal{U}}\|\leq\lim_{r\to\infty}\|(rI(|f_{n}|>r))_{n,\mathcal{U}}\|=0,
\end{align*}
where the last equality follows from Lemma \ref{limitexists}. Thus,
\[\lim_{r\to\infty}(f_{n}^{(1)}I(|f_{n}|>r))_{n,\mathcal{U}}=\lim_{r\to\infty}(f_{n}^{(1)}I(|f_{n}^{(1)}|>r))_{n,\mathcal{U}}.\]
Similarly we have
\[\lim_{r\to\infty}(f_{n}^{(2)}I(|f_{n}|>r))_{n,\mathcal{U}}=\lim_{r\to\infty}(f_{n}^{(2)}I(|f_{n}^{(2)}|>r))_{n,\mathcal{U}}.\]
So (\ref{compdecomp}) is proved.
\end{proof}
Recall that $\mathcal{M}$ is defined in (\ref{m}).
\begin{lemma}\label{mproj}
Let $2<p<\infty$. The map $R_{p}:(L^{p})^{\mathcal{U}}\to(L^{p})^{\mathcal{U}}$ defined by
\[R_{p}[(f_{n})_{n,\mathcal{U}}]=\lim_{r\to\infty}(f_{n}I(|f_{n}|>r))_{n,\mathcal{U}},\quad (f_{n})_{n,\mathcal{U}}\in(L^{p})^{\mathcal{U}}\]
is a norm $1$ projection onto $\mathcal{M}$.
\end{lemma}
\begin{proof}
In view of Lemma \ref{rdefined}, it suffices to show that
\begin{equation}\label{rinm}
R_{p}[(f_{n})_{n,\mathcal{U}}]\in\mathcal{M},\quad (f_{n})_{n,\mathcal{U}}\in(L^{p})^{\mathcal{U}},
\end{equation}
and
\begin{equation}\label{minr}
R_{p}[(f_{n})_{n,\mathcal{U}}]=(f_{n})_{n,\mathcal{U}},\quad (f_{n})_{n,\mathcal{U}}\in\mathcal{M}.
\end{equation}
Let $(f_{n})_{n,\mathcal{U}}\in(L^{p})^{\mathcal{U}}$. Let $(g_{n})_{n,\mathcal{U}}=R_{p}[(f_{n})_{n,\mathcal{U}}]$. Then
\[\lim_{r\to\infty}\|(f_{n}I(|f_{n}|>r))_{n,\mathcal{U}}-(g_{n})_{n,\mathcal{U}}\|=0,\]
and so since $p>2$,
\begin{equation}\label{2approx}
\lim_{r\to\infty}\lim_{n,\mathcal{U}}\|f_{n}I(|f_{n}|>r)-g_{n}\|_{2}\leq\lim_{r\to\infty}\lim_{n,\mathcal{U}}\|f_{n}I(|f_{n}|>r)-g_{n}\|_{p}=0.
\end{equation}
Note that
\[\|f_{n}I(|f_{n}|>r)\|_{2}^{2}\leq\frac{1}{r^{p-2}}\|f_{n}\|_{p}^{p}.\]
So
\[\lim_{r\to\infty}\lim_{n,\mathcal{U}}\|f_{n}I(|f_{n}|>r)\|_{2}=0.\]
Thus by (\ref{2approx}), $\displaystyle\lim_{n,\mathcal{U}}\|g_{n}\|_{2}=0$ and so $R_{p}[(f_{n})_{n,\mathcal{U}}]\in\mathcal{M}$. This proves (\ref{rinm}).

To prove (\ref{minr}), note that
\[\|(f_{n})_{n,\mathcal{U}}-R_{p}[(f_{n})_{n,\mathcal{U}}]\|=\lim_{r\to\infty}\lim_{n,\mathcal{U}}\|f_{n}I(|f_{n}|\leq r)\|_{p}.\]
But $\|f_{n}I(|f_{n}|\leq r)\|_{p}^{p}\leq r^{p-2}\|f_{n}\|_{2}^{2}$. Therefore, $R_{p}[(f_{n})_{n,\mathcal{U}}]=(f_{n})_{n,\mathcal{U}}$ for all $(f_{n})_{n,\mathcal{U}}\in\mathcal{M}$.
\end{proof}
\section{Properties of $R_{p}$}
In this section, we prove some properties of the projection $R_{p}$ defined in Lemma \ref{mproj}. These properties are needed in the proof of Theorem \ref{main}.

Let $1<p<\infty$. Let $\displaystyle\frac{1}{p}+\frac{1}{q}=1$. Recall from the preliminaries in Section 1 that the dual of $(L^{p})^{\mathcal{U}}$ can be identified with $(L^{q})^{\mathcal{U}}$ via the following duality relation
\[((f_{n})_{n,\mathcal{U}},(g_{n})_{n,\mathcal{U}})=\lim_{n,\mathcal{U}}\int f_{n}g_{n}\,d\mu,\]
where $(f_{n})_{n,\mathcal{U}}\in(L^{p})^{\mathcal{U}}$ and $(g_{n})_{n,\mathcal{U}}\in(L^{q})^{\mathcal{U}}$.
\begin{lemma}\label{sa}
If $1<p<\infty$ and $\displaystyle\frac{1}{p}+\frac{1}{q}=1$, then the adjoint of $R_{p}$ is $R_{q}$.
\end{lemma}
\begin{proof}
Let $(f_{n})_{n,\mathcal{U}}\in(L^{p})^{\mathcal{U}}$ and $(g_{n})_{n,\mathcal{U}}\in(L^{q})^{\mathcal{U}}$. We need to show that
\[(R_{p}[(f_{n})_{n,\mathcal{U}}],(g_{n})_{n,\mathcal{U}})=((f_{n})_{n,\mathcal{U}},R_{q}[(g_{n})_{n,\mathcal{U}}]).\]
It is obvious that
\begin{align}\label{adjoint}
&(R_{p}[(f_{n})_{n,\mathcal{U}}],(g_{n})_{n,\mathcal{U}})-((f_{n})_{n,\mathcal{U}},R_{q}[(g_{n})_{n,\mathcal{U}}])\\=&
(R_{p}[(f_{n})_{n,\mathcal{U}}],(g_{n})_{n,\mathcal{U}}-R_{q}[(g_{n})_{n,\mathcal{U}}])-
((f_{n})_{n,\mathcal{U}}-R_{p}[(f_{n})_{n,\mathcal{U}}],R_{q}[(g_{n})_{n,\mathcal{U}}]).\nonumber
\end{align}
Note that
\begin{align*}
&(R_{p}[(f_{n})_{n,\mathcal{U}}],(g_{n})_{n,\mathcal{U}}-R_{q}[(g_{n})_{n,\mathcal{U}}])\\=&
\lim_{s\to\infty}(R_{p}[(f_{n})_{n,\mathcal{U}}],(g_{n}I(|g_{n}|\leq s))_{n,\mathcal{U}})\\=&
\lim_{s\to\infty}\lim_{r\to\infty}((f_{n}I(|f_{n}|>r))_{n,\mathcal{U}},(g_{n}I(|g_{n}|\leq s))_{n,\mathcal{U}})\\=&
\lim_{s\to\infty}\lim_{r\to\infty}\lim_{n,\mathcal{U}}\int f_{n}g_{n}I(|f_{n}|>r)I(|g_{n}|\leq s)\,d\mu.
\end{align*}
So
\begin{align*}
&|(R_{p}[(f_{n})_{n,\mathcal{U}}],(g_{n})_{n,\mathcal{U}}-R_{q}[(g_{n})_{n,\mathcal{U}}])|\\\leq&
\limsup_{s\to\infty}\limsup_{r\to\infty}\lim_{n,\mathcal{U}}\int|f_{n}||g_{n}|I(|f_{n}|>r)I(|g_{n}|\leq s)\,d\mu\\\leq&
\limsup_{s\to\infty}\limsup_{r\to\infty}\lim_{n,\mathcal{U}}s\int|f_{n}|I(|f_{n}|>r)\,d\mu\\\leq&
\limsup_{s\to\infty}\limsup_{r\to\infty}\lim_{n,\mathcal{U}}s\frac{1}{r^{p-1}}\int|f_{n}|^{p}\,d\mu\\=&
\limsup_{s\to\infty}\limsup_{r\to\infty}s\frac{1}{r^{p-1}}\|(f_{n})_{n,\mathcal{U}}\|^{p}=0.
\end{align*}
Hence
\[(R_{p}[(f_{n})_{n,\mathcal{U}}],(g_{n})_{n,\mathcal{U}}-R_{q}[(g_{n})_{n,\mathcal{U}}])=0.\]
Interchanging the roles of $(f_{n})_{n,\mathcal{U}}$ and $(g_{n})_{n,\mathcal{U}}$ and the roles of $p$ and $q$, we have
\[((f_{n})_{n,\mathcal{U}}-R_{p}[(f_{n})_{n,\mathcal{U}}],R_{q}[(g_{n})_{n,\mathcal{U}}])=0.\]
Thus by (\ref{adjoint}) the result follows.
\end{proof}
Recall that $\widehat{L^{p}}$ is defined in Section 1.
\begin{lemma}\label{rhat}
Let $1<p<\infty$. Then the range of $R_{p}$ is contained in $\widehat{L^{p}}$.
\end{lemma}
\begin{proof}
Let $(f_{n})_{n,\mathcal{U}}\in(L^{p})^{\mathcal{U}}$. Let $g\in L^{q}$ where $\displaystyle\frac{1}{p}+\frac{1}{q}=1$. By definition of $R_{q}$, we have $R_{q}[(g)_{n,\mathcal{U}}]=0$. So by Lemma \ref{sa},
\[(R_{p}[(f_{n})_{n,\mathcal{U}}],(g)_{n,\mathcal{U}})=((f_{n})_{n,\mathcal{U}},R_{q}[(g)_{n,\mathcal{U}}])=0.\]
So $R_{p}[(f_{n})_{n,\mathcal{U}}]\in\widehat{L^{p}}$.
\end{proof}
\section{Uses of selectivity}
In this section, we prove some lemmas that assume, in an essential way, that the ultrafilter is selective. These lemmas are needed in the proof of Theorem \ref{main}. Recall that the definition of selective is given in Section 1.
\begin{lemma}\label{selconv}
Let $\mathcal{U}$ be a selective nonprincipal ultrafilter on $\mathbb{N}$. Let $Z$ be a metric space. Let $(x_{n})_{n\geq 1}$ be a sequence in $Z$ converging to an element $x\in Z$ through $\mathcal{U}$. Then there exists $A\in\mathcal{U}$ such that the subsequence $(x_{n})_{n\in A}$ converges to $x$.
\end{lemma}
\begin{proof}
For each $k\geq 1$, let $\displaystyle A_{k}=\{n\in\mathbb{N}:d(x_{n},x)<\frac{1}{k}\}$. Then $A_{k}\in\mathcal{U}$ for all $k\geq 1$. Since $\mathcal{U}$ is selective, there exists $A\in\mathcal{U}$ that is almost contained in each $A_{k}$. So the subsequence $(x_{n})_{n\in A}$ converges to $x$.
\end{proof}
\begin{lemma}\label{selectivesubseq}
Let $\mathcal{U}$ be a selective nonprincipal ultrafilter on $\mathbb{N}$. Let $A_{1},A_{2},\ldots$ be sets in $\mathcal{U}$. Then there exists a sequence $(k_{n})_{n\geq 1}$ in $\mathbb{N}$ such that $k_{n}\to\infty$ as $n\to\infty$ and
\[\{n\in\mathbb{N}:n\in A_{k_{n}}\}\in\mathcal{U}.\]
\end{lemma}
\begin{proof}
Since $\mathcal{U}$ is selective, there exists $A\in\mathcal{U}$ that is almost contained in each $A_{k}$. We may assume that $A\subset A_{1}$. For each $n\geq 1$, let
\[k_{n}=\begin{array}{cc}\begin{cases}\sup\{k\geq 1:n\in A_{1}\cap\ldots\cap A_{k}\},&n\in A\backslash(A_{1}\cap A_{2}\cap\ldots)\\n,&n\in(A_{1}\cap A_{2}\cap\ldots)\cup A^{c}\end{cases}.\end{array}\]
Observe that $A\subset\{n\in\mathbb{N}:n\in A_{k_{n}}\}$. Since $A\in\mathcal{U}$, it follows that $\{n\in\mathbb{N}:n\in A_{k_{n}}\}\in\mathcal{U}$.

It remains to show that $k_{n}\to\infty$. Since $A$ is almost contained in each $A_{k}$, we have that $A$ is almost contained in $A_{1}\cap\ldots\cap A_{k}$ for each $k\geq 1$. Thus, for each $k\geq 1$, there exists $N(k)\geq 1$ such that $A\cap[N(k),\infty)\subset A_{1}\cap\ldots\cap A_{k}$. So for every $k\geq 1$ and $n\in[N(k),\infty)\cap A\backslash(A_{1}\cap A_{2}\cap\ldots)$, we have $k_{n}\geq k$. So for every $k\geq 1$ and $n\geq N(k)$, we have $k_{n}\geq\min(k,n)$. So $k_{n}\to\infty$ as $n\to\infty$.
\end{proof}
\begin{lemma}\label{disjoint}
Let $1<p<\infty$. Let $\mathcal{U}$ be a selective nonprincipal ultrafilter on $\mathbb{N}$. If $(f_{n})_{n,\mathcal{U}}\in\mathrm{ran}\,R_{p}$ then there are $g_{1},g_{2},\ldots\in L^{p}$ with disjoint supports such that $(f_{n})_{n,\mathcal{U}}=(g_{n})_{n,\mathcal{U}}$.
\end{lemma}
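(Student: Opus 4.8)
The plan is to realize the given element by a sequence of functions whose mass has been pushed onto small-measure ``tall peak'' sets, and then to make those sets pairwise disjoint after passing to a subsequence chosen by $\mathcal{U}$, discarding only a vanishing amount of $L^{p}$-mass in the process. This is the exact $L^{p}$-analogue of Reid's passage from a weakly null sequence to an orthogonal one (Lemma \ref{orthogonal}), with ``disjointly supported'' playing the role of ``orthogonal''.

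First I would record what membership in $\mathrm{ran}\,R$ means. Since $R=I-P_{x}$ with $P_{x}[(f_{n})_{n,\mathcal{U}}]=\lim_{r\to\infty}(f_{n}I(|f_{n}|\le r))_{n,\mathcal{U}}$ (Lemma \ref{r}), the hypothesis $(f_{n})_{n,\mathcal{U}}\in\mathrm{ran}\,R$ is equivalent to $\lim_{r\to\infty}\lim_{n,\mathcal{U}}\|f_{n}I(|f_{n}|\le r)\|_{p}=0$; moreover $\mathrm{ran}\,R\subseteq\widehat{L^{p}}$ by Lemma \ref{rhat}, so $(f_{n})_{n,\mathcal{U}}$ is weakly null through $\mathcal{U}$. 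After replacing $(f_{n})$ by a representative with $C:=\sup_{n}\|f_{n}\|_{p}<\infty$, I would choose levels $r_{k}\uparrow\infty$ with $\|f_{n}I(|f_{n}|\le r_{k})\|_{p}<2^{-k}$ for all $n$ in some $A_{k}\in\mathcal{U}$ and with $C/r_{k}^{p}<2^{-k}$, so that automatically $\mu(|f_{n}|>r_{k})<2^{-k}$. Using the diagonalization property of selectivity I would pass to $A_{0}\in\mathcal{U}$ almost contained in every $A_{k}$; for $n\in A_{0}$ this lets me pick $k(n)\to\infty$ with $n\in A_{k(n)}$ and set $\rho(n)=r_{k(n)}$, $E_{n}=\{|f_{n}|>\rho(n)\}$. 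Then $\|f_{n}-f_{n}I(E_{n})\|_{p}<2^{-k(n)}\to 0$ and $\mu(E_{n})<2^{-k(n)}\to 0$ along $\mathcal{U}$, so the ``peak part'' $f_{n}I(E_{n})$ already represents $(f_{n})_{n,\mathcal{U}}$; it remains only to disjointify the sets $E_{n}$.

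For the disjointification I would control overlaps by the quantity $O(m,n)=\int_{E_{m}\cap E_{n}}|f_{m}|^{p}\,d\mu$. For each fixed $m$, since $|f_{m}|^{p}$ is integrable and $\mu(E_{m}\cap E_{n})\le\mu(E_{n})\to 0$ along $\mathcal{U}$, absolute continuity of the integral gives $\lim_{n,\mathcal{U}}O(m,n)=0$. This is precisely the input that selectivity converts into a single set: exactly as in the proof of Lemma \ref{orthogonal}, I would extract $A\in\mathcal{U}$, $A\subseteq A_{0}$, enumerated $n_{1}<n_{2}<\cdots$, with the triangular decay $O(n_{k},n_{j})<2^{-j}$ for all $k<j$. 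Setting $g_{n_{k}}=f_{n_{k}}I\bigl(E_{n_{k}}\setminus\bigcup_{j>k}E_{n_{j}}\bigr)$ and $g_{n}=0$ for $n\notin A$, the supports are pairwise disjoint (for $k<l$ one has $\mathrm{supp}(g_{n_{k}})\subseteq E_{n_{k}}\setminus E_{n_{l}}$ while $\mathrm{supp}(g_{n_{l}})\subseteq E_{n_{l}}$), and
\[\|f_{n_{k}}-g_{n_{k}}\|_{p}\le\|f_{n_{k}}I(|f_{n_{k}}|\le\rho(n_{k}))\|_{p}+\Bigl(\sum_{j>k}O(n_{k},n_{j})\Bigr)^{1/p}<2^{-k(n_{k})}+2^{-k/p},\]
which tends to $0$ along $A$; hence $(g_{n})_{n,\mathcal{U}}=(f_{n})_{n,\mathcal{U}}$.

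I expect the main obstacle to be this disjointification step, and specifically the requirement that it discard only a vanishing amount of $L^{p}$-mass rather than merely of measure: because the $f_{n}$ in $\mathrm{ran}\,R$ are arbitrarily tall, a fixed small-measure overlap can still carry a definite amount of mass, so it is not enough to make the measures $\mu(E_{n})$ summable. The remedy is to control the overlap integrals $O(n_{k},n_{j})$ themselves and, crucially, to obtain a set $A$ realizing the triangular decay that genuinely lies in $\mathcal{U}$; this is exactly where selectivity (its Ramsey/selector character, the same ingredient powering Lemma \ref{orthogonal}) is indispensable, since a naive greedy choice of indices need not produce a member of $\mathcal{U}$.
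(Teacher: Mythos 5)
Your proposal is correct and follows essentially the same route as the paper: first use the pseudo-intersection property of selectivity to replace each $f_{n}$ by its peak part supported on a set of measure tending to $0$, then use the interlacing-of-blocks construction together with the selector property to extract $B\in\mathcal{U}$ along which the overlaps decay triangularly, and finally trim the supports with a vanishing $L^{p}$ error. The only (cosmetic) difference is that you track the overlap integrals $O(m,n)=\int_{E_{m}\cap E_{n}}|f_{m}|^{p}\,d\mu$ while the paper works with $\|f_{m}I(\mathrm{supp}(f_{n}))\|_{p}$, which is the same quantity up to a $p$-th root once the first reduction is made.
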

\begin{proof}
Since $(f_{n})_{n,\mathcal{U}}\in \mathrm{ran}\,R_{p}$ and $R_{p}$ is a projection,
\[\lim_{r\to\infty}(f_{n}I(|f_{n}|\leq r))_{n,\mathcal{U}}=0.\]
Thus for every $k\geq 1$,
\[\lim_{n,\mathcal{U}}\|f_{n}I(|f_{n}|\leq k)\|_{p}\leq\lim_{r\to\infty}\lim_{n,\mathcal{U}}\|f_{n}I(|f_{n}|\leq r)\|_{p}=0.\]
For every $k\geq 1$, let $\displaystyle A_{k}=\{n\geq 1:\|f_{n}I(|f_{n}|\leq k)\|_{p}\leq\frac{1}{k}\}$. Since $\mathcal{U}$ is selective, by Lemma \ref{selectivesubseq}, there exists a sequence $(k_{n})_{n\geq 1}$ in $\mathbb{N}$ such that $k_{n}\to\infty$ and
\[\{n\geq 1:\|f_{n}I(|f_{n}|\leq k_{n})\|_{p}\leq\frac{1}{k_{n}}\}\in\mathcal{U}.\]
So
\[(f_{n})_{n,\mathcal{U}}=(f_{n}I(|f_{n}|>k_{n}))_{n,\mathcal{U}}.\]
Since $\displaystyle\sup_{n\geq 1}\|f_{n}\|_{p}<\infty$, by Markov's inequality, $\mu(|f_{n}|>k_{n})\to 0$. Therefore without loss of generality, we may assume that $\mu(\mathrm{supp}(f_{n}))\to 0$.

Choose $0=m(0)<m(1)<m(2)<\ldots$ as follows:\\
Since $\|f_{1}I(\mathrm{supp}(f_{n}))\|_{p}\to 0$ as $n\to\infty$, there exists $m(1)\geq 1$ such that
\[\|f_{1}I(\mathrm{supp}(f_{n}))\|_{p}\leq\frac{1}{2},\quad n\geq m(1).\]
There exists $m(2)>m(1)$ such that
\[\|f_{m}I(\mathrm{supp}(f_{n}))\|_{p}\leq\frac{1}{2^{2}},\quad m\leq m(1),\,n\geq m(2).\]
Suppose that $m(1),\ldots,m(k-1)$ have been chosen. There exists $m(k)>m(k-1)$ such that
\begin{equation}\label{weaknull}
\|f_{m}I(\mathrm{supp}(f_{n}))\|_{p}\leq\frac{1}{2^{k}},\quad m\leq m(k-1),\,n\geq m(k).
\end{equation}
Note that $\{[m(k)+1,m(k+1)]:k\geq 0\}$ is a partition of $\mathbb{N}$. So
\[\mathbb{N}=\left(\bigcup_{k\text{ even}}[m(k)+1,m(k+1)]\right)\cup\left(\bigcup_{k\text{ odd}}[m(k)+1,m(k+1)]\right).\]
Since $\mathcal{U}$ is an ultrafilter, it contains exactly one of these two sets. For simplicity, assume that it contains the first one $\cup_{k\text{ even}}[m(k)+1,m(k+1)]$. Since $\mathcal{U}$ is selective, there exists $B\in\mathcal{U}$ such that $B\cap[m(2k)+1,m(2k+1)]$ is a singleton for each $k\geq 0$ and $B\subset\cup_{k\text{ even}}[m(k)+1,m(k+1)]$. Write $B=\{t(0),t(1),\ldots\}$ where $t(0)<t(1)<\ldots$. We have $m(2k)+1\leq t(k)\leq m(2k+1)$ so by (\ref{weaknull}),
\[\|f_{t(j)}I(\mathrm{supp}(f_{t(k)}))\|_{p}\leq\frac{1}{2^{2k}},\quad 0\leq j<k.\]
Thus
\[\|f_{t(j)}I(\bigcup_{k=j+1}^{\infty}\mathrm{supp}(f_{t(k)}))\|_{p}\leq\frac{1}{2^{2j}},\quad j\geq 0.\]
Let
\[g_{t(j)}=f_{t(j)}I(\bigcup_{k=j+1}^{\infty}\mathrm{supp}(f_{t(k)}))^{c},\quad j\geq 0,\]
and
\[g_{n}=0,\quad n\notin B.\]
Then $(f_{n})_{n,\mathcal{U}}=(g_{n})_{n,\mathcal{U}}$ and all the $g_{n}$ have disjoint supports. Thus the result follows.
\end{proof}
\begin{lemma}\label{xy}
Let $\mathcal{U}$ be a selective nonprincipal ultrafilter on $\mathbb{N}$. Let $\mathcal{X}$ be a reflexive Banach space. Let $(y_{n})_{n,\mathcal{U}}\in\widehat{\mathcal{X}}$ and $(x_{n}^{*})_{n,\mathcal{U}}\in\widehat{\mathcal{X}^{*}}$.
Then there exists $B=\{t(0),t(1),\ldots\}\in\mathcal{U}$, where $t(0)<t(1)<\ldots$ such that
\[|x_{t(j)}^{*}(y_{t(k)})|\leq\frac{1}{2^{\max(j,k)}},\quad j\neq k.\]
\end{lemma}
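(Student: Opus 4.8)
The plan is to mirror the proof of Lemma \ref{disjoint}: first use condition (1) in the definition of selectivity to pass to a set on which the relevant pairings converge genuinely, then build a sequence of blocks, and finally use condition (2) together with a parity argument to extract $B$. The two hypotheses enter symmetrically. Since $(y_n)_{n,\mathcal{U}}\in\widehat{X}$ we have $w\hyphen\lim_{n,\mathcal{U}}y_n=0$, so $\lim_{n,\mathcal{U}}x^{*}(y_n)=0$ for every $x^{*}\in X^{*}$; and since $X$ is reflexive, $(x_n^{*})_{n,\mathcal{U}}\in\widehat{X^{*}}$ means $\lim_{n,\mathcal{U}}x_n^{*}(y)=0$ for every $y\in X$. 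In particular, for each fixed $m$ and each $i\geq 1$, the sets $\{n:|x_m^{*}(y_n)|\leq 1/i\}$ and $\{n:|x_n^{*}(y_m)|\leq 1/i\}$ belong to $\mathcal{U}$. Applying condition (1) to this countable family, I would obtain $A\in\mathcal{U}$ almost contained in each of them; then for every fixed $m$ one has $x_m^{*}(y_n)\to 0$ and $x_n^{*}(y_m)\to 0$ as $n\to\infty$ through $A$, now a genuine limit.

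Next I would construct $0=m(0)<m(1)<m(2)<\cdots$ inductively so that, at stage $k$,
\[
|x_m^{*}(y_n)|\leq\frac{1}{2^{k}}\quad\text{and}\quad |x_n^{*}(y_m)|\leq\frac{1}{2^{k}}\qquad(m\in A,\ m\leq m(k-1),\ n\in A,\ n\geq m(k)).
\]
This is possible because only finitely many $m\leq m(k-1)$ occur and, for each of them, the genuine limits along $A$ produced in the first step let one push $m(k)$ out far enough. This is the analogue of inequality (\ref{weaknull}); note that a single stage now controls both pairings, which is what will ultimately yield the required two-sided estimate.

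Finally, the intervals $D_k=[m(k)+1,m(k+1)]$ form a partition of $\mathbb{N}$ into finite sets, so exactly one of $\bigcup_{k\text{ even}}D_k$ and $\bigcup_{k\text{ odd}}D_k$ lies in $\mathcal{U}$; assume the former. By condition (2) there is $B_0\in\mathcal{U}$ meeting each $D_k$ in a singleton, and I would set $B=B_0\cap A\cap\bigcup_{k\text{ even}}D_k\in\mathcal{U}$. Writing $B=\{t(0)<t(1)<\cdots\}$, each $t(k)$ lies in some even block $D_{2b_k}$ with $b_0<b_1<\cdots$, hence $b_k\geq k$; moreover $B\subseteq A$, so the estimate of the previous step applies to the given representatives. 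For $j<k$ one has $t(j)\leq m(2b_k-1)$ and $t(k)\geq m(2b_k)$, so the stage $2b_k$ inequality gives that $|x_{t(j)}^{*}(y_{t(k)})|$ and $|x_{t(k)}^{*}(y_{t(j)})|$ are both at most $2^{-2b_k}\leq 2^{-\max(j,k)}$, which is the claim.

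The step I expect to be the main obstacle is the last one: reconciling condition (1), which supplies the set $A$ on which limits are genuine, with condition (2), which supplies the gap structure, while guaranteeing that the extracted set is contained in $A$ so that the estimates hold for the \emph{given} sequences $(x_n^{*})$ and $(y_n)$ rather than for modified representatives. The resolution is that one does not need a singleton in every even block: intersecting the selector $B_0$ with $A$ may empty some blocks, but it preserves the property that $B$ meets each even block in at most one point, and $B\in\mathcal{U}$ is automatically infinite, so the enumeration $t(0)<t(1)<\cdots$ still occupies strictly increasing even-block indices, which is exactly the spacing the estimate requires.
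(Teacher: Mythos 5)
Your proof is correct and follows essentially the same route as the paper's sketch: use selectivity condition (1) to pass to a set on which the relevant pairings converge genuinely, build the blocks $m(k)$ controlling both $x_{m}^{*}(y_{n})$ and $x_{n}^{*}(y_{m})$ at each stage, and extract $B$ by the parity argument and condition (2) exactly as in Lemma \ref{disjoint}. Your treatment of the first step is in fact slightly more careful than the paper's ``we may assume $y_{n}\to 0$ and $x_{n}^{*}\to 0$ weakly,'' since you only arrange genuine convergence of the countably many pairings actually needed along a set $A\in\mathcal{U}$ and then ensure $B\subseteq A$, which is all the argument requires.
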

\begin{proof}
Since $\displaystyle w\hyphen\lim_{n,\mathcal{U}}y_{n}=0$ and $\displaystyle w\hyphen\lim_{n,\mathcal{U}}x_{n}^{*}=0$, by Lemma \ref{selconv}, we may assume that $y_{n}\to 0$ and $x_{n}^{*}\to 0$ weakly as $n\to\infty$.

Choose $0=m(0)<m(1)<m(2)<\ldots$ as follows:\\
Since $x_{1}^{*}(y_{n})\to 0$ and $x_{n}^{*}(y_{1})\to 0$, there exists $m(1)\geq 1$ such that
\[|x_{1}^{*}(y_{n})|\leq\frac{1}{2}\text{ and }|x_{n}^{*}(y_{1})|\leq\frac{1}{2},\quad n\geq m(1).\]
There exists $m(2)>m(1)$ such that
\[|x_{m}^{*}(y_{n})|\leq\frac{1}{2^{2}}\text{ and }|x_{n}^{*}(y_{m})|\leq\frac{1}{2^{2}},\quad m\leq m(1),\,n\geq m(2).\]
Suppose that $m(1),\ldots,m(k-1)$ have been chosen. There exists $m(k)>m(k-1)$ such that
\begin{equation}\label{biorth}
|x_{m}^{*}(y_{n})|\leq\frac{1}{2^{k}}\text{ and }|x_{n}^{*}(y_{m})|\leq\frac{1}{2^{k}},\quad m\leq m(k-1),\,n\geq m(k).
\end{equation}
Since $\{[m(k)+1,m(k+1)]:k\geq 0\}$ is a partition of $\mathbb{N}$,
\[\mathbb{N}=\left(\bigcup_{k\text{ even}}[m(k)+1,m(k+1)]\right)\cup\left(\bigcup_{k\text{ odd}}[m(k)+1,m(k+1)]\right).\]
Since $\mathcal{U}$ is an ultrafilter, it contains exactly one of these two sets, say, the first one $\cup_{k\text{ even}}[m(k)+1,m(k+1)]$. Since $\mathcal{U}$ is selective, there exists $B\in\mathcal{U}$ such that $B\cap[m(2k)+1,m(2k+1)]$ is a singleton for each $k\geq 0$ and $B\subset\cup_{k\text{ even}}[m(k)+1,m(k+1)]$. Write $B=\{t(0),t(1),\ldots\}$ where $t(0)<t(1)<\ldots$. We have $m(2k)+1\leq t(k)\leq m(2k+1)$ so by (\ref{biorth}),
$$|x_{t(j)}^{*}(y_{t(k)})|\leq\frac{1}{2^{2k}}\text{ and }|x_{t(k)}^{*}(y_{t(j)})|\leq\frac{1}{2^{2k}},\quad 0\leq j<k.\eqno\qedhere$$
\end{proof}
\begin{lemma}\label{blockhaar}
Let $1<p<\infty$. Let $\mathcal{U}$ be a selective nonprincipal ultrafilter on $\mathbb{N}$. If $(f_{n})_{n,\mathcal{U}}\in\widehat{L^{p}}$ then there are $g_{1},g_{2},\ldots\in L^{p}$ such that $(f_{n})_{n,\mathcal{U}}=(g_{n})_{n,\mathcal{U}}$ and $(g_{n})_{n\geq 1}$ is a block sequence of the Haar basis for $L^{p}$.
\end{lemma}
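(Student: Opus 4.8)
The plan is to adapt the \emph{Bessaga--Pełczyński} perturbation-to-a-block-basis argument to the ultrapower setting, using the selectivity of $\mathcal{U}$ exactly as in Lemma \ref{disjoint} to convert the usual subsequence selection into a set belonging to $\mathcal{U}$. Let $(h_{i})_{i\geq 0}$ be the Haar basis of $L^{p}$ and let $P_{N}$ be the associated partial-sum projections; since $1<p<\infty$ these are uniformly bounded, $P_{N}f\to f$ for every $f$, and each $P_{N}$ has finite rank. Because each biorthogonal functional $h_{i}^{*}$ lies in $L^{q}$ and $w\hyphen\lim_{n,\mathcal{U}}f_{n}=0$, every Haar coefficient satisfies $\lim_{n,\mathcal{U}}\langle f_{n},h_{i}^{*}\rangle=0$, so that
\[
\lim_{n,\mathcal{U}}\|P_{N}f_{n}\|_{p}=0\qquad\text{for each fixed }N.
\]

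First I would carry out the same reduction as at the beginning of the proof of Lemma \ref{disjoint}. Setting $A_{k}=\{n:\|P_{k}f_{n}\|_{p}\leq 1/k\}\in\mathcal{U}$ and applying the first selectivity condition, I obtain $A\in\mathcal{U}$ almost contained in each $A_{k}$, and hence a sequence $k_{n}\to\infty$ with $\{n:\|P_{k_{n}}f_{n}\|_{p}\leq 1/k_{n}\}\in\mathcal{U}$. Replacing $f_{n}$ by $(I-P_{k_{n}})f_{n}$ does not change $(f_{n})_{n,\mathcal{U}}$, so I may assume that $P_{k_{n}}f_{n}=0$ with $k_{n}\to\infty$; equivalently, the Haar expansion of $f_{n}$ is supported on indices $>k_{n}$. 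This makes the \emph{lower} frequency cut-off automatic. For the \emph{upper} cut-off, for each $n$ I choose, using that $(h_{i})$ is a Schauder basis, an integer $\tau(n)>k_{n}$ with $\|f_{n}-P_{\tau(n)}f_{n}\|_{p}\leq 1/n$.

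Next I would interleave the two cut-offs via the $m(k)$ construction of Lemma \ref{disjoint}. Using $k_{n}\to\infty$, I pick $0=m(0)<m(1)<\cdots$ so that $k_{n}\geq\max\{\tau(i):i\leq m(k-1)\}$ whenever $n\geq m(k)$. Partitioning $\mathbb{N}$ into the intervals $[m(k)+1,m(k+1)]$ and applying the second selectivity condition exactly as in Lemma \ref{disjoint}, I obtain $B=\{t(0)<t(1)<\cdots\}\in\mathcal{U}$ with $m(2k)+1\leq t(k)\leq m(2k+1)$. The key gain is that consecutive selected indices straddle a full block of the partition, so that $k_{t(j+1)}\geq\tau(t(j))$ for all $j$. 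Setting $g_{t(j)}=P_{\tau(t(j))}f_{t(j)}$ and $g_{n}=0$ for $n\notin B$, each $g_{t(j)}$ is supported on the Haar indices in $(k_{t(j)},\tau(t(j))]$, and the inequality $k_{t(j+1)}\geq\tau(t(j))$ shows these supports are strictly increasing; hence the nonzero $g_{n}$ form a block basis of the Haar basis. Finally $\|f_{t(j)}-g_{t(j)}\|_{p}\leq 1/t(j)$, and since $B\in\mathcal{U}$ this yields $\lim_{n,\mathcal{U}}\|f_{n}-g_{n}\|_{p}=0$, i.e.\ $(f_{n})_{n,\mathcal{U}}=(g_{n})_{n,\mathcal{U}}$.

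The main obstacle is the coupling between the two cut-offs: the lower endpoint $k_{t(j+1)}$ of the next block must dominate the upper endpoint $\tau(t(j))$ of the current one, yet $\tau(t(j))$ can only be fixed after $t(j)$ is known while $k_{t(j+1)}$ must be controlled in advance. This is precisely what the reduction (forcing $k_{n}\to\infty$), together with the interleaved $m(k)$ construction and the singleton-selection property of a selective ultrafilter, resolves, in direct parallel with Lemma \ref{disjoint}.
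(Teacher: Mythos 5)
Your argument is correct and is exactly the technique the paper intends: the paper gives no separate proof of this lemma, saying only that it follows by the same method as Lemma \ref{disjoint} (a gliding-hump selection combined with the two selectivity properties of $\mathcal{U}$), which is precisely what you carry out with the Haar partial-sum projections $P_{N}$ playing the role of the support truncations. The only cosmetic point is that your $g_{n}$ vanish for $n\notin B$, so strictly speaking only the nonzero $g_{n}$ form a block basis of the Haar basis; this is harmless (the paper's own proof of Lemma \ref{disjoint} produces zeros off $B$ in the same way) and suffices for the use of the lemma in Lemma \ref{2bound}.
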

\begin{proof}
Since $\displaystyle w\hyphen\lim_{n,\mathcal{U}}f_{n}=0$, by Lemma \ref{selconv}, we may assume that $f_{n}\to 0$ weakly as $n\to\infty$.

Let $(u_{j})_{j\geq 1}$ be the Haar basis for $L^{p}$. For $m\geq 1$, let $P_{m}$ be the projection from $L^{p}$ onto $\{u_{j}:1\leq j\leq m\}$, i.e.,
\[P_{m}u_{j}=\begin{array}{cc}\begin{cases}u_{j},&1\leq j\leq m\\0,&j>m\end{cases}.\end{array}\]
Since $(u_{j})_{j\geq 1}$ is a Schauder basis for $L^{p}$ \cite{Albiac}, $P_{m}\to I$ as $m\to\infty$ in the strong operator topology.

Choose $1\leq m(1)<n(1)<m(2)<n(2)<\ldots$ as follows:\\
There exists $m(1)\geq 1$ such that
\[\|f_{1}-P_{m(1)}f_{1}\|_{p}\leq\frac{1}{2}.\]
Since $f_{n}\to 0$ weakly and $P_{m(1)}$ has finite rank, there exists $n(1)>m(1)$ such that
\[\|P_{m(1)}f_{n}\|_{p}\leq\frac{1}{2},\quad n\geq n(1).\]
Suppose that $m(1)<n(1)<\ldots<m(k-1)<n(k-1)$ have been chosen. There exists $m(k)>n(k-1)$ such that
\begin{equation}\label{fn1}
\|f_{n}-P_{m(k)}f_{n}\|_{p}\leq\frac{1}{2^{k}},\quad n\leq n(k-1).
\end{equation}
Since $f_{n}\to 0$ weakly and $P_{m(k)}$ has finite rank, there exists $n(k)>m(k)$ such that
\begin{equation}\label{fn2}
\|P_{m(k)}f_{n}\|_{p}\leq\frac{1}{2^{k}},\quad n\geq n(k).
\end{equation}
Let $n(0)=0$. Since $\{[n(k)+1,n(k+1)]:k\geq 0\}$ is a partition of $\mathbb{N}$,
\[\mathbb{N}=\left(\bigcup_{k\text{ even}}[n(k)+1,n(k+1)]\right)\cup\left(\bigcup_{k\text{ odd}}[n(k)+1,n(k+1)]\right).\]
Since $\mathcal{U}$ is an ultrafilter, it contains exactly one of these two sets, say, the first one $\cup_{k\text{ even}}[n(k)+1,n(k+1)]$. Since $\mathcal{U}$ is selective, there exists $B\in\mathcal{U}$ such that $B\cap[n(2k)+1,n(2k+1)]$ is a singleton for each $k\geq 0$ and $B\subset\cup_{k\text{ even}}[n(k)+1,n(k+1)]$. Write $B=\{t(0),t(1),\ldots\}$ where $t(0)<t(1)<\ldots$. We have $n(2k)+1\leq t(k)\leq n(2k+1)$ so by (\ref{fn1}) and (\ref{fn2}),
\[\|f_{t(k)}-P_{m(2k+2)}f_{t(k)}\|_{p}\leq\frac{1}{2^{2k+2}}\]
and
\[\|P_{m(2k)}f_{t(k)}\|_{p}\leq\frac{1}{2^{2k}}.\]
So
\[\|f_{t(k)}-(P_{m(2k+2)}-P_{m(2k)})f_{t(k)}\|_{p}\leq\frac{1}{2^{2k+2}}+\frac{1}{2^{2k}}.\]
Let
\[g_{t(k)}=(P_{m(2k+2)}-P_{m(2k)})f_{t(k)},\quad k\geq 0,\]
and
\[g_{n}=0,\quad n\notin B.\]
Then $(f_{n})_{n,\mathcal{U}}=(g_{n})_{n,\mathcal{U}}$ and $(g_{n})_{n\geq 1}$ is a block sequence of $(u_{j})_{j\geq 1}$. Thus the result follows.
\end{proof}
\section{Proof of Theorem \ref{main}}
In this section, we first show that certain rank two operators are in the strong operator topology closure of $\{\rho_{\mathcal{U}}(\pi(T)):T\in B(L^{p})\}$. Then combining this with the results in Section 2 and 3, we obtain the proof of Theorem \ref{main}.

To begin, let us recall two classical results.
\begin{lemma}[\cite{Albiac}, Theorem 6.1.6]\label{unconditional}
Let $1<p<\infty$. Let $(u_{j})_{j\geq 1}$ be the Haar basis for $L^{p}$. Then there is a constant $C>0$ such that
\[\frac{1}{C}\left\|\sum_{j=1}^{r}a_{j}u_{j}\right\|\leq\left\|\sum_{j=1}^{r}\epsilon_{j}a_{j}u_{j}\right\|\leq C\left\|\sum_{j=1}^{r}a_{j}u_{j}\right\|,\]
for every $r\geq 1$, $\epsilon_{1},\ldots,\epsilon_{r}\in\{-1,1\}$ and scalars $a_{1},\ldots,a_{r}$.
\end{lemma}
\begin{lemma}[\cite{Albiac}, Theorem 6.2.14]\label{type}
Let $2<p<\infty$. There exists a constant $C>0$ such that
\[\mathbb{E}\left\|\sum_{i=1}^{r}\epsilon_{i}f_{i}\right\|_{p}\leq C\left(\sum_{i=1}^{r}\|f_{i}\|_{p}^{2}\right)^{\frac{1}{2}},\]
for every $r\geq 1$ and $f_{1},\ldots,f_{r}\in L^{p}$. The expectation is over $(\epsilon_{1},\ldots,\epsilon_{r})$ uniformly distributed on $\{-1,1\}^{r}$.
\end{lemma}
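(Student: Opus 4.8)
The plan is to reduce the vector-valued inequality to the classical scalar Khintchine inequality via Fubini's theorem and then to reassemble the pieces using Minkowski's inequality in $L^{p/2}$. The hypothesis $p>2$ enters precisely because it forces $p/2>1$, which is exactly what makes the reassembly step legitimate.

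First I would raise the quantity to be estimated to the $p$-th power. Since $p\geq 1$, Jensen's inequality applied to the convex function $x\mapsto x^{p}$ gives
\[\mathbb{E}\left\|\sum_{i=1}^{n}\epsilon_{i}f_{i}\right\|_{p}\leq\left(\mathbb{E}\left\|\sum_{i=1}^{n}\epsilon_{i}f_{i}\right\|_{p}^{p}\right)^{\frac{1}{p}},\]
so it suffices to bound the right-hand side. Next I would apply Fubini's theorem (everything in sight is nonnegative and the sum is finite) to interchange the expectation over $(\epsilon_{1},\ldots,\epsilon_{n})$ with the integral over $[0,1]$:
\[\mathbb{E}\left\|\sum_{i=1}^{n}\epsilon_{i}f_{i}\right\|_{p}^{p}=\int_{0}^{1}\mathbb{E}\left|\sum_{i=1}^{n}\epsilon_{i}f_{i}(t)\right|^{p}\,d\mu(t).\]

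For each fixed $t$, the scalars $f_{i}(t)$ are governed by the scalar Khintchine inequality: there is a constant $B_{p}$ with $\mathbb{E}\left|\sum_{i}\epsilon_{i}a_{i}\right|^{p}\leq B_{p}^{p}\left(\sum_{i}a_{i}^{2}\right)^{\frac{p}{2}}$ for all real $a_{i}$. Applying this pointwise in $t$ and integrating yields
\[\int_{0}^{1}\mathbb{E}\left|\sum_{i=1}^{n}\epsilon_{i}f_{i}(t)\right|^{p}\,d\mu(t)\leq B_{p}^{p}\int_{0}^{1}\left(\sum_{i=1}^{n}f_{i}(t)^{2}\right)^{\frac{p}{2}}\,d\mu(t)=B_{p}^{p}\left\|\sum_{i=1}^{n}f_{i}^{2}\right\|_{p/2}^{p/2}.\]
Finally, because $p/2\geq 1$, the $L^{p/2}$ norm is subadditive, so Minkowski's inequality gives
\[\left\|\sum_{i=1}^{n}f_{i}^{2}\right\|_{p/2}\leq\sum_{i=1}^{n}\left\|f_{i}^{2}\right\|_{p/2}=\sum_{i=1}^{n}\|f_{i}\|_{p}^{2}.\]
Combining the four displays and taking $p$-th roots produces the desired bound with $C=B_{p}$.

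I do not expect a genuine obstacle here, since the reduction steps (Jensen, Fubini, Minkowski) are all routine; the only substantive input is Khintchine's inequality itself, which is a classical estimate and which is, in effect, what this lemma packages in a vector-valued form. The single place where the assumption $p>2$ is indispensable is the final Minkowski step: it is valid exactly because $p/2\geq 1$, and for $1<p<2$ one could not pull the sum outside the $L^{p/2}$ norm in this direction, reflecting the fact that $L^{p}$ fails type $2$ in that range.
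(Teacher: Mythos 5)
Your proof is correct: Jensen, Fubini, the scalar Khintchine inequality, and Minkowski in $L^{p/2}$ (legitimate since $p/2\geq 1$, with $\|f_i^2\|_{p/2}=\|f_i\|_p^2$) combine exactly as you say to give the bound with $C=B_p$. The paper offers no proof of this lemma, only the citation to Albiac--Kalton, and your argument is precisely the standard one given there for the type-$2$ inequality in $L^p$, $p>2$.
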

\begin{lemma}\label{2bound}
Let $2<p<\infty$. Let $\mathcal{U}$ be a selective nonprincipal ultrafilter on $\mathbb{N}$. Let $(f_{n})_{n,\mathcal{U}}\in\widehat{L^{p}}$. Then there are $C>0$, $g_{1},g_{2},\ldots\in L^{p}$ such that $(f_{n})_{n,\mathcal{U}}=(g_{n})_{n,\mathcal{U}}$ and
\[\left\|\sum_{i=1}^{r}a_{i}g_{i}\right\|_{p}\leq C\left(\sum_{i=1}^{r}|a_{i}|^{2}\right)^{\frac{1}{2}},\]
for every $r\geq 1$ and scalars $a_{1},\ldots,a_{r}$.
\end{lemma}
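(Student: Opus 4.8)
The plan is to combine the three ingredients in the order they are listed, with essentially all the real work already packaged into the cited lemmas. First I would apply Lemma \ref{blockhaar} to the given element $(f_n)_{n,\mathcal{U}}\in\widehat{L^p}$ to obtain functions $g_1,g_2,\ldots\in L^p$ with $(f_n)_{n,\mathcal{U}}=(g_n)_{n,\mathcal{U}}$ and such that $(g_n)_{n\geq 1}$ is a block basis of the Haar basis $(h_j)_{j\geq 1}$ of $L^p$; write $g_i=\sum_{j\in B_i}c_jh_j$ where the index sets $B_i$ are pairwise disjoint. Since both $(f_n)$ and $(g_n)$ are admissible representatives of a single element of the ultrapower, they are bounded sequences, so $M:=\sup_{n}\|g_n\|_p<\infty$. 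Let $K$ denote the unconditional constant of the Haar basis of $L^p$.

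The key step is to pass from the fixed vector $\sum_i a_ig_i$ to a random-sign average so that Lemma \ref{type} can be applied. Fix finitely supported scalars $(a_i)$ and signs $\epsilon_i\in\{-1,1\}$. Because the blocks $B_i$ are disjoint, replacing $g_i$ by $\epsilon_ig_i$ simply multiplies the Haar coefficients indexed by $B_i$ by $\epsilon_i$; that is, if $\sum_i a_ig_i=\sum_j b_jh_j$ then $\sum_i\epsilon_ia_ig_i=\sum_j\theta_jb_jh_j$ with $\theta_j=\epsilon_i$ for $j\in B_i$. Unconditionality of the Haar basis (applied in the direction that recovers the original coefficients from the sign-flipped ones) therefore yields $\|\sum_i a_ig_i\|_p\leq K\|\sum_i\epsilon_ia_ig_i\|_p$ for every choice of signs. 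Since the left-hand side is independent of $\epsilon$, taking the expectation over $(\epsilon_i)$ uniformly distributed on $\{-1,1\}$ gives
\[\Big\|\sum_i a_ig_i\Big\|_p\leq K\,\mathbb{E}\Big\|\sum_i\epsilon_ia_ig_i\Big\|_p.\]

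Finally I would apply Lemma \ref{type} with $f_i=a_ig_i$ and absorb the factors $\|g_i\|_p$ using the uniform bound $M$:
\[\mathbb{E}\Big\|\sum_i\epsilon_ia_ig_i\Big\|_p\leq C_0\Big(\sum_i\|a_ig_i\|_p^2\Big)^{1/2}=C_0\Big(\sum_i|a_i|^2\|g_i\|_p^2\Big)^{1/2}\leq C_0M\Big(\sum_i|a_i|^2\Big)^{1/2},\]
where $C_0$ is the constant of Lemma \ref{type}. Combining the two displays gives the claimed inequality with $C=KC_0M$.

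As for difficulties, there is no hard analytic step at this stage: the substance has already been extracted into Lemmas \ref{blockhaar} and \ref{type}, and what remains is bookkeeping. The only points deserving care are the translation of a sign change on the block basis into a sign change on the underlying Haar coefficients, which is precisely where disjointness of the supports $B_i$ enters, and the observation that the representative $(g_n)$ is bounded so that $M<\infty$. Both are routine, so the genuine obstacle lies upstream in the proof of Lemma \ref{blockhaar} rather than here.
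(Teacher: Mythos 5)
Your argument is correct and is exactly the route the paper intends: the paper states only that the lemma ``follows from Lemma \ref{blockhaar} and \ref{type} and the fact that the Haar basis for $L^{p}$ is unconditional,'' and you have supplied precisely the missing bookkeeping (sign-flipping via disjoint Haar blocks, averaging over $\epsilon$, then the type~2 inequality with the uniform bound $M$). Nothing to add.
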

\begin{proof}
By Lemma \ref{blockhaar}, there are $g_{1},g_{2},\ldots\in L^{p}$ such that $(f_{n})_{n,\mathcal{U}}=(g_{n})_{n,\mathcal{U}}$ and $(g_{n})_{n\geq 1}$ is a block sequence of the Haar basis for $L^{p}$. Since $(g_{n})_{n\geq 1}$ is a block sequence of the Haar basis for $L^{p}$, by Lemma \ref{unconditional}, there is a constant $C>0$ such that
\[\left\|\sum_{i=1}^{r}a_{i}g_{i}\right\|_{p}\leq C\left\|\sum_{i=1}^{r}\epsilon_{i}a_{i}g_{i}\right\|_{p},\]
for every $r\geq 1$, $\epsilon_{1},\ldots,\epsilon_{r}\in\{-1,1\}$ and scalars $a_{1},\ldots,a_{r}$. Thus,
\[\left\|\sum_{i=1}^{r}a_{i}g_{i}\right\|_{p}\leq C\mathbb{E}\left\|\sum_{i=1}^{r}\epsilon_{i}a_{i}g_{i}\right\|_{p},\]
for every $r\geq 1$ and scalars $a_{1},\ldots,a_{r}$. So by Lemma \ref{type}, there are constants $C_{1},C_{2}>0$ such that
\[\left\|\sum_{i=1}^{r}a_{i}g_{i}\right\|_{p}\leq C_{1}\left(\sum_{i=1}^{r}|a_{i}|^{2}\|g_{i}\|_{p}^{2}\right)^{\frac{1}{2}}\leq C_{2}\left(\sum_{i=1}^{r}|a_{i}|^{2}\right)^{\frac{1}{2}},\]
for every $r\geq 1$ and scalars $a_{1},\ldots,a_{r}$.
\end{proof}
If $\mathcal{X}$ is a Banach space and $(x_{n}^{*})_{n,\mathcal{U}}\in\widehat{\mathcal{X}^{*}}$, then we can identify $(x_{n}^{*})_{n,\mathcal{U}}$ as an element in the dual of $\widehat{\mathcal{X}}$ via the duality relation defined in (\ref{duality}). So if $(x_{n})_{n,\mathcal{U}}\in\widehat{\mathcal{X}}$ and $(x_{n}^{*})_{n,\mathcal{U}}\in\widehat{\mathcal{X}^{*}}$, then $(x_{n})_{n,\mathcal{U}}\otimes(x_{n}^{*})_{n,\mathcal{U}}$ defines a rank one operator on $\widehat{\mathcal{X}}$. The following lemma provides a sufficient condition for this rank one operator to be in the strong operator topology (SOT) closure of $\{\rho_{\mathcal{U}}(\pi(T)):T\in B(\mathcal{X})\}$.
\begin{lemma}\label{1d}
Let $\mathcal{U}$ be a selective nonprincipal ultrafilter on $\mathbb{N}$. Let $\mathcal{X}$ be a reflexive Banach space. Let $(x_{n})_{n,\mathcal{U}}\in\widehat{\mathcal{X}}$ and $(x_{n}^{*})_{n,\mathcal{U}}\in\widehat{\mathcal{X}^{*}}$. Assume that for every $x\in \mathcal{X}$, the summation
\[\sum_{i=1}^{\infty}x_{i}^{*}(x)x_{i}\]
converges unconditionally. For each $A\in\mathcal{U}$, let
\[T_{A}=\sum_{i\in A}x_{i}\otimes x_{i}^{*}.\]
Then
\[\lim_{A\in\mathcal{U}}\rho_{\mathcal{U}}(\pi(T_{A}))=(x_{n})_{n,\mathcal{U}}\otimes(x_{n}^{*})_{n,\mathcal{U}},\]
where the convergence is in SOT and we treat $\mathcal{U}$ as a net with order defined by inverse inclusion of sets.
\end{lemma}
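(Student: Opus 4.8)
The plan is to prove the SOT convergence pointwise. Fix $\xi=(y_{n})_{n,\mathcal{U}}\in X^{\mathcal{U}}$; I must show that $T_{A}^{\mathcal{U}}\xi$ converges in norm, as $A$ decreases through $\mathcal{U}$, to
\[\big[(x_{n})_{n,\mathcal{U}}\otimes(x_{n}^{*})_{n,\mathcal{U}}\big]\xi=\Big(\lim_{n,\mathcal{U}}x_{n}^{*}(y_{n})\Big)(x_{n})_{n,\mathcal{U}},\]
where I use the identification $(X^{\mathcal{U}})^{*}=(X^{*})^{\mathcal{U}}$ to evaluate $(x_{n}^{*})_{n,\mathcal{U}}$ at $\xi$. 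Writing $X^{\mathcal{U}}=X\oplus\widehat{X}$, I would split $\xi=(y)_{n,\mathcal{U}}+(z_{n})_{n,\mathcal{U}}$ with $y=w\hyphen\lim_{n,\mathcal{U}}y_{n}\in X$ and $(z_{n})_{n,\mathcal{U}}\in\widehat{X}$, and handle the two summands separately by linearity of $A\mapsto T_{A}^{\mathcal{U}}$. I may assume $M:=\sup_{n}\|x_{n}\|<\infty$ since $(x_{n})_{n,\mathcal{U}}$ is an ultrapower element.

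For the constant summand, $T_{A}^{\mathcal{U}}[(y)_{n,\mathcal{U}}]=(T_{A}y)_{n,\mathcal{U}}$ is the image in $X^{\mathcal{U}}$ of $T_{A}y=\sum_{i\in A}x_{i}^{*}(y)x_{i}\in X$, while the corresponding target is $0$ because $(x_{n}^{*})_{n,\mathcal{U}}\in\widehat{X^{*}}$ forces $\lim_{n,\mathcal{U}}x_{n}^{*}(y)=0$. So I only need $\lim_{A\in\mathcal{U}}T_{A}y=0$ in $X$. By the unconditional convergence of $\sum_{i}x_{i}^{*}(y)x_{i}$, for each $\varepsilon>0$ there is a finite $F\subset\mathbb{N}$ with $\|\sum_{i\in G}x_{i}^{*}(y)x_{i}\|<\varepsilon$ for every finite $G$ disjoint from $F$; letting $G$ exhaust $A$ yields $\|T_{A}y\|\leq\varepsilon$ whenever $A\cap F=\emptyset$. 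Since $\mathbb{N}\setminus F\in\mathcal{U}$, this gives the claim.

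For the summand in $\widehat{X}$ the point is that only the diagonal term of $T_{A}$ survives. Applying Lemma \ref{xy} to $(z_{n})_{n,\mathcal{U}}\in\widehat{X}$ and $(x_{n}^{*})_{n,\mathcal{U}}\in\widehat{X^{*}}$ produces $B=\{t(0)<t(1)<\cdots\}\in\mathcal{U}$ with $|x_{t(j)}^{*}(z_{t(k)})|\leq 2^{-\max(j,k)}$ for $j\neq k$. For $A\in\mathcal{U}$ with $A\subseteq B$ and $n=t(k)\in A$, the off-diagonal part obeys
\[\Big\|\sum_{i\in A,\ i\neq n}x_{i}^{*}(z_{n})x_{i}\Big\|\leq M\sum_{j\neq k}2^{-\max(j,k)}\leq M\,\frac{k+1}{2^{k}}.\]
Because $k=k_{n}\to\infty$ through $\mathcal{U}$ (each set $B\setminus\{t(0),\dots,t(K)\}$ lies in $\mathcal{U}$), the right-hand side tends to $0$ through $\mathcal{U}$, so in fact $T_{A}^{\mathcal{U}}[(z_{n})_{n,\mathcal{U}}]=(x_{n}^{*}(z_{n})x_{n})_{n,\mathcal{U}}$ for every such $A$. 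Finally $(x_{n}^{*}(z_{n})x_{n})_{n,\mathcal{U}}=\big(\lim_{n,\mathcal{U}}x_{n}^{*}(z_{n})\big)(x_{n})_{n,\mathcal{U}}$ since $\|x_{n}\|\leq M$ and $x_{n}^{*}(z_{n})\to\lim_{n,\mathcal{U}}x_{n}^{*}(z_{n})$ through $\mathcal{U}$, and $\lim_{n,\mathcal{U}}x_{n}^{*}(z_{n})=\lim_{n,\mathcal{U}}x_{n}^{*}(y_{n})$ because $\lim_{n,\mathcal{U}}x_{n}^{*}(y)=0$. Thus on $A\subseteq B$ this summand already equals its target, and combining with the constant summand over $A\subseteq B\cap(\mathbb{N}\setminus F)\in\mathcal{U}$ finishes the proof.

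The substantive step, and the main obstacle, is the uniform control of the off-diagonal sum $\sum_{i\in A,\ i\neq n}x_{i}^{*}(z_{n})x_{i}$, which is exactly what Lemma \ref{xy} supplies. The one point needing care is that the decay estimate must hold for the very functionals $x_{i}^{*}$ that define $T_{A}$, since altering them off a $\mathcal{U}$-small set would change $T_{A}$, whereas the representative of $(z_{n})_{n,\mathcal{U}}$ may be changed freely. Restricting the net to $A\subseteq B$ is what keeps the relevant $x_{i}^{*}$ unchanged and makes the bound $M(k+1)2^{-k}$ collapse to $0$ under $\lim_{n,\mathcal{U}}$.
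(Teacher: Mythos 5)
Your proof is correct and follows essentially the same route as the paper: split $X^{\mathcal{U}}=X\oplus\widehat{X}$, kill the off-diagonal terms on $\widehat{X}$ by applying Lemma \ref{xy} and restricting the net to $A\subseteq B$ (where the bound $\sum_{j\neq k}2^{-\max(j,k)}\leq(k+1)2^{-k}\to 0$ does the work), and show the constant summand tends to $0$. The only, harmless, deviation is in that last step: you use the unconditional-convergence hypothesis (Cauchy criterion on tails disjoint from a finite set $F$, with $\mathbb{N}\setminus F\in\mathcal{U}$), whereas the paper uses selectivity to extract $B_{0}=\{u(0),u(1),\dots\}\in\mathcal{U}$ with $|x_{u(j)}^{*}(y)|\leq 2^{-j}$ and sums the geometric series; both yield $\lim_{A\in\mathcal{U}}T_{A}y=0$.
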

\begin{proof}
Without loss of generality, we may assume that $\|x_{n}\|\leq 1$ for all $n\geq 1$. Let $(y_{n})_{n,\mathcal{U}}\in\widehat{\mathcal{X}}$. By Lemma \ref{xy}, there exists $B=\{t(0),t(1),\ldots\}\in\mathcal{U}$, where $t(0)<t(1)<\ldots$, such that
\[|x_{t(j)}^{*}(y_{t(k)})|\leq\frac{1}{2^{\max(j,k)}},\quad j\neq k.\]
So for every $A\subset B$,
\begin{align*}
&\|[T_{A}^{\mathcal{U}}-(x_{n})_{n,\mathcal{U}}\otimes(x_{n}^{*})_{n,\mathcal{U}}](y_{n})_{n,\mathcal{U}}\|\\=&
\lim_{n,\mathcal{U}}\left\|\sum_{\substack{i\in A\\i\neq n}}x_{i}^{*}(y_{n})x_{i}\right\|\\\leq&\lim_{n,\mathcal{U}}\sum_{\substack{i\in B\\i\neq n}}|x_{i}^{*}(y_{n})|\|x_{i}\|\\\leq&\limsup_{k\to\infty}\sum_{\substack{j=1\\j\neq k}}^{\infty}|x_{t(j)}^{*}(y_{t(k)})|\|x_{t(j)}\|\\\leq&
\limsup_{k\to\infty}\sum_{\substack{j=1\\j\neq k}}^{\infty}\frac{1}{2^{\max(j,k)}}=0.
\end{align*}
Hence,
\[\lim_{A\in\mathcal{U}}[T_{A}^{\mathcal{U}}-(x_{n})_{n,\mathcal{U}}\otimes(x_{n}^{*})_{n,\mathcal{U}}](y_{n})_{n,\mathcal{U}}=0,\quad (y_{n})_{n,\mathcal{U}}\in\widehat{\mathcal{X}}.\]
Since $\rho_{\mathcal{U}}(\pi(T_{A}))$ is the restriction of $T_{A}^{\mathcal{U}}$ to $\widehat{\mathcal{X}}$, the result follows.
\end{proof}
In the next two lemmas, we apply Lemma \ref{1d} to show that certain rank one operators on $\widehat{L^{p}}$ are in the SOT closure of $\{\rho_{\mathcal{U}}(\pi(T)):T\in B(L^{p})\}$.
\begin{lemma}\label{nonR}
Let $2<p<\infty$. Let $\mathcal{U}$ be a selective nonprincipal ultrafilter on $\mathbb{N}$. Let $(f_{n})_{n,\mathcal{U}}\in\widehat{L^{p}}$ and $(f_{n}^{*})_{n,\mathcal{U}}\in\widehat{L^{q}}$ where $\displaystyle\frac{1}{p}+\frac{1}{q}=1$. Assume that $\displaystyle\sup_{n\geq 1}\|f_{n}^{*}\|_{2}<\infty$. Then $(f_{n})_{n,\mathcal{U}}\otimes(f_{n}^{*})_{n,\mathcal{U}}\in\{\rho_{\mathcal{U}}(\pi(T)):T\in B(L^{p})\}^{-SOT}$.
\end{lemma}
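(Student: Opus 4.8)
The plan is to realise the rank one operator $(f_{n})_{n,\mathcal{U}}\otimes(f_{n}^{*})_{n,\mathcal{U}}$ as a limit of operators $T_{A}^{\mathcal{U}}$ via Lemma \ref{1d}. That lemma produces, for suitable representing sequences, the rank one operator as the SOT-limit of the net $(T_{A}^{\mathcal{U}})_{A\in\mathcal{U}}$ with $T_{A}=\sum_{i\in A}g_{i}\otimes g_{i}^{*}$, and since SOT-convergence implies WOT-convergence, the limit lies in the WOT closure. Thus the whole task reduces to choosing representatives $g_{n}$ of $(f_{n})_{n,\mathcal{U}}$ and $g_{n}^{*}$ of $(f_{n}^{*})_{n,\mathcal{U}}$ for which (i) each $T_{A}$ is bounded on $L^{p}$, so that $T_{A}\in B(L^{p})$, and (ii) $\sum_{i}g_{i}^{*}(f)g_{i}$ converges unconditionally for every $f\in L^{p}$, which is exactly the hypothesis required to invoke Lemma \ref{1d}.

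To produce these representatives I would use selectivity twice. First, since $p>2$ and $(f_{n})_{n,\mathcal{U}}\in\widehat{L^{p}}$, Lemma \ref{2bound} lets me replace $f_{n}$ by $g_{n}$ with the same image in $(L^{p})^{\mathcal{U}}$ and a constant $C>0$ such that $\|\sum_{i}a_{i}g_{i}\|_{p}\leq C(\sum_{i}|a_{i}|^{2})^{1/2}$. Second, I would move the functionals into $L^{2}$: because the measure is finite and $p>2$ we have $L^{p}\subset L^{2}\subset L^{q}$ with $L^{p}$ dense in $L^{2}$, so the hypothesis $\sup_{n}\|f_{n}^{*}\|_{2}<\infty$ together with $w\hyphen\lim_{n,\mathcal{U}}f_{n}^{*}=0$ in $L^{q}$ upgrades, by a routine density and uniform boundedness argument, to $w\hyphen\lim_{n,\mathcal{U}}f_{n}^{*}=0$ in $L^{2}$; that is, $(f_{n}^{*})_{n,\mathcal{U}}\in\widehat{L^{2}}$. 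Lemma \ref{orthogonal} then supplies orthogonal $g_{n}^{*}\in L^{2}$ with $(g_{n}^{*})_{n,\mathcal{U}}=(f_{n}^{*})_{n,\mathcal{U}}$ in $(L^{2})^{\mathcal{U}}$; since $\|\cdot\|_{q}\leq\|\cdot\|_{2}$ on a probability space this equality persists in $(L^{q})^{\mathcal{U}}$, and after zeroing the $g_{n}^{*}$ off a suitable set in $\mathcal{U}$ I may assume $D:=\sup_{n}\|g_{n}^{*}\|_{2}<\infty$.

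With these representatives the two hypotheses are verified directly. For $f\in L^{p}\subset L^{2}$, Bessel's inequality applied to the orthonormal system $g_{n}^{*}/\|g_{n}^{*}\|_{2}$ gives $\sum_{i}|g_{i}^{*}(f)|^{2}\leq D^{2}\|f\|_{2}^{2}<\infty$. Combining this square-summability with the upper bound from Lemma \ref{2bound} shows both that $T_{A}$ is bounded on $L^{p}$, with $\|T_{A}\|\leq CD$ uniformly in $A$, and that the partial sums of $\sum_{i}g_{i}^{*}(f)g_{i}$ satisfy the unconditional Cauchy criterion, hence converge unconditionally. Since $(g_{n})_{n,\mathcal{U}}=(f_{n})_{n,\mathcal{U}}\in\widehat{L^{p}}$ and $(g_{n}^{*})_{n,\mathcal{U}}=(f_{n}^{*})_{n,\mathcal{U}}\in\widehat{L^{q}}$, Lemma \ref{1d} applies and yields $(f_{n})_{n,\mathcal{U}}\otimes(f_{n}^{*})_{n,\mathcal{U}}=\lim_{A\in\mathcal{U}}T_{A}^{\mathcal{U}}$ in SOT, which lies in the WOT closure of $\{T^{\mathcal{U}}:T\in B(L^{p})\}$.

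The main obstacle is step (ii): it requires simultaneously an $\ell^{2}$ upper bound on the sequence $(g_{n})$ and $\ell^{2}$ control of the pairings $g_{i}^{*}(f)$. The type $2$ inequality (Lemma \ref{2bound}) supplies the former but only because $p>2$, and the latter comes from genuine $L^{2}$ orthogonality, which is why the assumption $\sup_{n}\|f_{n}^{*}\|_{2}<\infty$ is essential: it is precisely what lets the weak-null condition be transported from $L^{q}$ to $L^{2}$ so that Lemma \ref{orthogonal} becomes available.
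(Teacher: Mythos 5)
Your proposal is correct and follows essentially the same route as the paper: upgrade $(f_{n}^{*})_{n,\mathcal{U}}$ to an element of $\widehat{L^{2}}$ and orthogonalize via Lemma \ref{orthogonal}, replace $(f_{n})$ using the type-$2$ bound of Lemma \ref{2bound}, deduce unconditional convergence of $\sum_{i}f_{i}^{*}(x)f_{i}$ from Bessel's inequality, and conclude with Lemma \ref{1d}. The only difference is that you spell out a few details the paper leaves implicit (the passage from weak-nullity in $L^{q}$ to $L^{2}$, the uniform bound on $\|T_{A}\|$), which is harmless.
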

\begin{proof}
By Lemma \ref{blockhaar}, we may assume that $(f_{n}^{*})_{n,\mathcal{U}}$ is a block sequence of the Haar basis for $L^{q}$. Since the Haar basis consists of orthogonal functions, all the $f_{n}^{*}$ are orthogonal.

By Lemma \ref{2bound}, we may assume that there exists $C>0$ such that
\[\left\|\sum_{i=1}^{r}a_{i}f_{i}\right\|_{p}\leq C\left(\sum_{i=1}^{r}|a_{i}|^{2}\right)^{\frac{1}{2}},\]
for every $r\geq 1$ and scalars $a_{1},\ldots,a_{r}$. Thus, for every finite subset $F$ of $\mathbb{N}$,
\[\left\|\sum_{i\in F}f_{i}^{*}(x)f_{i}\right\|_{p}\leq C\left(\sum_{i\in F}|f_{i}^{*}(x)|^{2}\right)^{\frac{1}{2}},\quad x\in L^{p}.\]
Since all the $f_{n}^{*}$ are orthogonal and $\displaystyle\sup_{n\geq 1}\|f_{n}^{*}\|_{2}<\infty$, it follows that for every $x\in L^{p}\subset L^{2}$, the summation
\[\sum_{i=1}^{\infty}f_{i}^{*}(x)f_{i}\]
converges unconditionally. By Lemma \ref{1d}, the result follows.
\end{proof}
Recall that $R_{p}$ is defined in Lemma \ref{mproj}.
\begin{lemma}\label{R}
Let $2<p<\infty$. Let $\mathcal{U}$ be a selective nonprincipal ultrafilter on $\mathbb{N}$. Let $(f_{n})_{n,\mathcal{U}}\in\widehat{L^{p}}$ and $(f_{n}^{*})_{n,\mathcal{U}}\in\widehat{L^{q}}$. Assume that $(f_{n})_{n,\mathcal{U}}\in\mathrm{ran}\,R_{p}$ and $(f_{n}^{*})_{n,\mathcal{U}}\in\mathrm{ran}\,R_{q}$. Then $(f_{n})_{n,\mathcal{U}}\otimes(f_{n}^{*})_{n,\mathcal{U}}\in\{\rho_{\mathcal{U}}(\pi(T)):T\in B(L^{p})\}^{-SOT}$.
\end{lemma}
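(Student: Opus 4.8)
The plan is to realize the rank one operator $(f_n)_{n,\mathcal{U}}\otimes(f_n^*)_{n,\mathcal{U}}$ as a SOT limit of ultrapowers of genuine operators on $L^p$ via Lemma \ref{1d}, so that it automatically lands in $\{T^{\mathcal{U}}:T\in B(L^p)\}^{-WOT}$. To invoke Lemma \ref{1d}, I must first choose convenient representatives and then verify its unconditional convergence hypothesis. The whole point of assuming $(f_n)_{n,\mathcal{U}},(f_n^*)_{n,\mathcal{U}}\in\mathrm{ran}\,R$ is that it lets me use Lemma \ref{disjoint} to replace these two elements by sequences whose terms have pairwise disjoint supports, and disjointness is precisely what makes the relevant series behave like $\ell^p$.

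Concretely, I would apply Lemma \ref{disjoint} to $(f_n)_{n,\mathcal{U}}$ to obtain $g_1,g_2,\ldots\in L^p$ with pairwise disjoint supports and $(f_n)_{n,\mathcal{U}}=(g_n)_{n,\mathcal{U}}$, and independently apply the same lemma with exponent $q$ to $(f_n^*)_{n,\mathcal{U}}$ to obtain $g_1^*,g_2^*,\ldots\in L^q$ with pairwise disjoint supports and $(f_n^*)_{n,\mathcal{U}}=(g_n^*)_{n,\mathcal{U}}$. Since these equalities hold in the ultrapower, it suffices to prove $(g_n)_{n,\mathcal{U}}\otimes(g_n^*)_{n,\mathcal{U}}\in\{T^{\mathcal{U}}:T\in B(L^p)\}^{-WOT}$, while $(g_n)_{n,\mathcal{U}}\in\widehat{L^p}$ and $(g_n^*)_{n,\mathcal{U}}\in\widehat{L^q}$ remain in force. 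Write $E_i=\mathrm{supp}(g_i^*)$, so the $E_i$ are pairwise disjoint, and set $M=\sup_i\|g_i^*\|_q<\infty$ and $N=\sup_i\|g_i\|_p<\infty$.

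The key estimate concerns the series $\sum_i g_i^*(x)g_i$ for $x\in L^p$. Because the supports of the $g_i$ are disjoint, for every finite $F\subset\mathbb{N}$ one has
\[\left\|\sum_{i\in F}g_i^*(x)g_i\right\|_p^p=\sum_{i\in F}|g_i^*(x)|^p\|g_i\|_p^p.\]
Because the supports of the $g_i^*$ are disjoint, Hölder's inequality applied on $E_i$ gives $|g_i^*(x)|=|\int g_i^*\,(xI(E_i))\,d\mu|\le\|g_i^*\|_q\|xI(E_i)\|_p$, whence
\[\sum_{i\in F}|g_i^*(x)|^p\|g_i\|_p^p\le M^pN^p\sum_{i\in F}\|xI(E_i)\|_p^p\le M^pN^p\|x\|_p^p,\]
the last step again using disjointness of the $E_i$. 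Letting $F$ exhaust $\mathbb{N}$, the series $\sum_i|g_i^*(x)|^p\|g_i\|_p^p$ converges, so its tails tend to $0$; combined with the first displayed identity this shows that $\sum_i g_i^*(x)g_i$ converges unconditionally for every $x\in L^p$. The same bound gives $\|T_Ax\|_p\le MN\|x\|_p$, so each $T_A=\sum_{i\in A}g_i\otimes g_i^*$ is a bounded operator on $L^p$, as Lemma \ref{1d} requires. Applying Lemma \ref{1d} then yields $(g_n)_{n,\mathcal{U}}\otimes(g_n^*)_{n,\mathcal{U}}=\lim_{A\in\mathcal{U}}T_A^{\mathcal{U}}$ in SOT; since SOT convergence implies WOT convergence and each $T_A^{\mathcal{U}}$ lies in $\{T^{\mathcal{U}}:T\in B(L^p)\}$, the limit lies in the WOT closure, finishing the proof.

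The only real obstacle is arranging the two disjointness conditions so they cooperate: the disjoint supports of the $g_i$ turn the norm of the image into an $\ell^p$ sum of the coefficients, while the disjoint supports of the $g_i^*$ turn the coefficients $g_i^*(x)$ into an $\ell^p$-summable sequence by splitting $x$ across the sets $E_i$. It is worth noting that, in contrast to Lemma \ref{nonR}, this argument never uses the type $2$ estimate of Lemma \ref{type}; the peakiness encoded by $\mathrm{ran}\,R$ supplies disjointness directly in place of a type $2$ bound.
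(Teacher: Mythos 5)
Your proof is correct and follows essentially the same route as the paper's: apply Lemma \ref{disjoint} to both elements to get disjointly supported representatives, use disjointness of the supports of the $g_i$ to turn the norm of $\sum_{i\in F}g_i^*(x)g_i$ into an $\ell^p$ sum, use H\"older on the disjoint supports of the $g_i^*$ to dominate that sum by $\sum_i\|xI(E_i)\|_p^p\le\|x\|_p^p$ (up to the constant $M^pN^p$, where the paper instead normalizes $\|f_n\|_p=\|f_n^*\|_q=1$), and conclude unconditional convergence so that Lemma \ref{1d} applies. The only cosmetic difference is that you carry the bounds $M,N$ explicitly rather than normalizing, and you note the boundedness of each $T_A$, which the paper leaves implicit.
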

\begin{proof}
By Lemma \ref{disjoint}, we may assume that all the $f_{n}$ have disjoint supports and all the $f_{n}^{*}$ have disjoint supports. Moreover, we may also assume that $\|f_{n}\|_{p}=\|f_{n}^{*}\|_{q}=1$ for all $n\geq 1$. For every finite subset $F$ of $\mathbb{N}$,
\[\left\|\sum_{i\in F}f_{i}^{*}(x)f_{i}\right\|_{p}=\left(\sum_{i\in F}|f_{i}^{*}(x)|^{p}\right)^{\frac{1}{p}},\quad x\in L^{p}.\]
But
\[|f_{i}^{*}(x)|\leq\|f_{i}^{*}\|_{q}\|xI(\mathrm{supp}(f_{i}^{*}))\|_{p}=\|xI(\mathrm{supp}(f_{i}^{*}))\|_{p}.\]
Therefore,
\[\left\|\sum_{i\in F}f_{i}^{*}(x)f_{i}\right\|_{p}^{p}\leq\sum_{i\in F}\|xI(\mathrm{supp}(f_{i}^{*}))\|_{p}^{p}.\]
Since all the $\mathrm{supp}(f_{i}^{*})$ are disjoint, it follows that for every $x\in L^{p}$, the summation
\[\sum_{i=1}^{\infty}f_{i}^{*}(x)f_{i}\]
converges unconditionally. By Lemma \ref{1d}, the result follows.
\end{proof}
We now combine Lemmas \ref{nonR} and \ref{R} to obtain the following lemma.
\begin{lemma}\label{rankone}
Let $2<p<\infty$. Let $\mathcal{U}$ be a selective nonprincipal ultrafilter on $\mathbb{N}$. Let $(f_{n})_{n,\mathcal{U}}\in\widehat{L^{p}}$ and $(f_{n}^{*})_{n,\mathcal{U}}\in\widehat{L^{q}}$. Then the operator $(f_{n})_{n,\mathcal{U}}\otimes(f_{n}^{*})_{n,\mathcal{U}}-(I-R_{p})[(f_{n})_{n,\mathcal{U}}\otimes(f_{n}^{*})_{n,\mathcal{U}}]R_{p}$ on $\widehat{L^{p}}$ is in $\{\rho_{\mathcal{U}}(\pi(T)):T\in B(L^{p})\}^{-SOT}$.
\end{lemma}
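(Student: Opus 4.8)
The plan is to expand the rank one operator into pieces adapted to the projection $R$ and to recognize each surviving piece via Lemmas \ref{R} and \ref{nonR}. Write $x=(f_{n})_{n,\mathcal{U}}\in\widehat{L^{p}}$ and $x^{*}=(f_{n}^{*})_{n,\mathcal{U}}\in\widehat{L^{q}}$. First I would rewrite the subtracted term: for bounded $y$ one has $([x\otimes x^{*}]R)(y)=x^{*}(Ry)x=(Rx^{*})(y)\,x$ by Lemma \ref{sa}, so $[x\otimes x^{*}]R=x\otimes Rx^{*}$ and hence $(I-R)[x\otimes x^{*}]R=(I-R)x\otimes Rx^{*}$. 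Expanding $x\otimes x^{*}=Rx\otimes Rx^{*}+(I-R)x\otimes Rx^{*}+x\otimes(I-R)x^{*}$ by bilinearity and subtracting, the troublesome cross term $(I-R)x\otimes Rx^{*}$ cancels, leaving
\[x\otimes x^{*}-(I-R)[x\otimes x^{*}]R=Rx\otimes Rx^{*}+x\otimes(I-R)x^{*}.\]
By Lemma \ref{rhat} and its dual (using Lemma \ref{sa}) we have $Rx\in\widehat{L^{p}}$ and $Rx^{*},(I-R)x^{*}\in\widehat{L^{q}}$, while $Rx,Rx^{*}\in\mathrm{ran}\,R$. Since $\{T^{\mathcal{U}}:T\in B(L^{p})\}^{-WOT}$ is a linear subspace, it suffices to put each summand in it, and the first summand $Rx\otimes Rx^{*}$ is immediate from Lemma \ref{R}. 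It remains to treat $x\otimes(I-R)x^{*}$.

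For this I would invoke Lemma \ref{nonR}, which requires the $L^{q}$ factor to be bounded in $L^{2}$; the obstacle is that $(I-R)x^{*}$ need not be. The remedy is an approximation. For $r>0$ put $x^{*}_{[r]}=(f_{n}^{*}I(|f_{n}^{*}|\le r))_{n,\mathcal{U}}$. Its representatives are bounded by $r$ in $L^{\infty}$, so $\sup_{n}\|f_{n}^{*}I(|f_{n}^{*}|\le r)\|_{2}\le r$, and by the definition of $R$ one has $x^{*}_{[r]}\to(I-R)x^{*}$ in norm as $r\to\infty$. The catch is that $x^{*}_{[r]}$ itself need not lie in $\widehat{L^{q}}$: truncation displaces a small weak limit into the bulk, quantified by $\int_{|f_{n}^{*}|>r}|f_{n}^{*}|\,d\mu\le\|f_{n}^{*}\|_{q}^{q}/r^{q-1}$. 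I would therefore replace $x^{*}_{[r]}$ by its $\widehat{L^{q}}$-component $v_{r}^{*}=x^{*}_{[r]}-g_{r}$, where $g_{r}=w\hyphen\lim_{n,\mathcal{U}}f_{n}^{*}I(|f_{n}^{*}|\le r)\in L^{q}$ is the $L^{q}$-part of $x_{[r]}^{*}$ under $(L^{q})^{\mathcal{U}}=L^{q}\oplus\widehat{L^{q}}$.

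Two points then have to be checked. First, $v_{r}^{*}$ remains bounded in $L^{2}$: since $p>2$ the inclusions $L^{p}\subset L^{2}\subset L^{q}$ hold, the representatives of $x_{[r]}^{*}$ are bounded in the reflexive space $L^{2}$, and testing against the dense subset $L^{p}$ of $L^{2}$ shows that $g_{r}$ equals their weak $L^{2}$ limit; hence $g_{r}\in L^{2}$ with $\|g_{r}\|_{2}\le r$, so $\sup_{n}\|f_{n}^{*}I(|f_{n}^{*}|\le r)-g_{r}\|_{2}\le 2r$. Second, $v_{r}^{*}\to(I-R)x^{*}$ in norm: the projection of $(L^{q})^{\mathcal{U}}$ onto $\widehat{L^{q}}$ is bounded and fixes $(I-R)x^{*}$, so applying it to $x^{*}_{[r]}\to(I-R)x^{*}$ gives $v_{r}^{*}\to(I-R)x^{*}$ (equivalently $g_{r}\to 0$). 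Now Lemma \ref{nonR} applies to $x\otimes v_{r}^{*}$ for each $r$, and since $\|x\otimes v_{r}^{*}-x\otimes(I-R)x^{*}\|=\|x\|\,\|v_{r}^{*}-(I-R)x^{*}\|\to 0$ while the WOT closure is norm-closed, $x\otimes(I-R)x^{*}$ lies in it as well. Combining with the first summand finishes the argument.

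The main obstacle is exactly this approximation: showing that $(I-R)x^{*}$ is a norm limit of elements of $\widehat{L^{q}}$ that are bounded in $L^{2}$. The tension is that truncation is what produces $L^{2}$-boundedness yet simultaneously spoils membership in $\widehat{L^{q}}$ by a small amount, so one must confirm that the correcting term $g_{r}$ is both bounded in $L^{2}$ (so the corrected approximants stay admissible for Lemma \ref{nonR}) and norm-negligible as $r\to\infty$; both rely on the finite-measure inclusions $L^{p}\subset L^{2}\subset L^{q}$ available for $p>2$.
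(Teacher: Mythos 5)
Your proof is correct and follows essentially the same route as the paper: the identity $x\otimes x^{*}-(I-R)[x\otimes x^{*}]R=Rx\otimes Rx^{*}+x\otimes(I-R)x^{*}$ (the paper derives it via $R^{*}=R$ in one chain of equalities, you via cancellation of the cross term — same decomposition), then Lemma \ref{R} for the first summand and, for the second, approximation of $(I-R)x^{*}$ by the truncations $f_{n}^{*}I(|f_{n}^{*}|\le r)$ recentered by subtracting their weak limit so as to land in $\widehat{L^{q}}$ with uniform $L^{2}$ bounds, followed by Lemma \ref{nonR} and norm convergence. Your recentering term $g_{r}$ is exactly the paper's $\varphi_{r}$, and your justification that $g_{r}\to 0$ matches the paper's appeal to Lemma \ref{rhat}.
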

\begin{proof}
We have
\begin{align*}
&(f_{n})_{n,\mathcal{U}}\otimes(f_{n}^{*})_{n,\mathcal{U}}-(I-R_{p})[(f_{n})_{n,\mathcal{U}}\otimes(f_{n}^{*})_{n,\mathcal{U}}]R_{p}\\=&
[(f_{n})_{n,\mathcal{U}}\otimes(f_{n}^{*})_{n,\mathcal{U}}](I-R_{p})+R_{p}[(f_{n})_{n,\mathcal{U}}\otimes(f_{n}^{*})_{n,\mathcal{U}}]R_{p}\\=&
(f_{n})_{n,\mathcal{U}}\otimes[(I-R_{q})(f_{n}^{*})_{n,\mathcal{U}}]+[R_{p}(f_{n})_{n,\mathcal{U}}]\otimes[R_{q}(f_{n}^{*})_{n,\mathcal{U}}],
\end{align*}
where the last equality follows from Lemma \ref{sa}.

For every $r>0$ and $n\geq 1$, let
\[h_{r,n}^{*}=f_{n}^{*}I(|f_{n}^{*}|\leq r)-\varphi_{r},\]
where $\displaystyle\varphi_{r}=w\hyphen\lim_{n,\mathcal{U}}f_{n}^{*}I(|f_{n}^{*}|\leq r)$. Then $|h_{r,n}^{*}|\leq 2r$ for all $n\geq 1$; $(h_{r,n}^{*})_{n,\mathcal{U}}\in\widehat{L^{q}}$, and $\displaystyle\lim_{r\to\infty}(h_{r,n}^{*})_{n,\mathcal{U}}=(I-R_{q})(f_{n}^{*})_{n,\mathcal{U}}$ (since $\displaystyle\lim_{r\to\infty}\varphi_{r}=0$ by Lemma \ref{rhat}).

By Lemma \ref{nonR}, $(f_{n})_{n,\mathcal{U}}\otimes(h_{r,n}^{*})_{n,\mathcal{U}}\in\{\rho_{\mathcal{U}}(\pi(T)):T\in B(L^{p})\}^{-SOT}$ for all $r>0$. Thus taking $r\to\infty$, we find that
\[(f_{n})_{n,\mathcal{U}}\otimes[(I-R_{q})(f_{n}^{*})_{n,\mathcal{U}}]\in\{\rho_{\mathcal{U}}(\pi(T)):T\in B(L^{p})\}^{-SOT}.\]
Also by Lemma \ref{R},
\[[R_{p}(f_{n})_{n,\mathcal{U}}]\otimes[R_{q}(f_{n}^{*})_{n,\mathcal{U}}]\in\{\rho_{\mathcal{U}}(\pi(T)):T\in B(L^{p})\}^{-SOT}.\]
Thus the result follows.
\end{proof}
\begin{proof}[Proof of Theorem \ref{main}]
It suffices to prove the result for $p>2$. For $p<2$, we can use duality and annihilation and apply the result for $p>2$.

By Lemma \ref{minvariant},
\[\{\rho_{\mathcal{U}}(\pi(T)):T\in B(L^{p})\}^{-WOT}\subset\{S\in B(\widehat{L^{p}}):S\mathcal{M}\subset\mathcal{M}\}.\]
This proves one direction. For the other direction, by Lemma \ref{rankone}, $S-(I-R_{p})SR_{p}\in\{\rho_{\mathcal{U}}(\pi(T)):T\in B(L^{p})\}^{-SOT}$ for every rank one operator $S$ on $\widehat{L^{p}}$. Since every operator on $\widehat{L^{p}}$ is the SOT limit of a net of finite rank operators on $\widehat{L^{p}}$, we have $S-(I-R_{p})SR_{p}\in\{\rho_{\mathcal{U}}(\pi(T)):T\in B(L^{p})\}^{-SOT}$ for every $S\in B(\widehat{L^{p}})$. By Lemma \ref{mproj}, $R_{p}$ is a projection onto $\mathcal{M}$ and so an operator on $\widehat{L^{p}}$ has $\mathcal{M}$ as an invariant subspace if and only if it has the form $S-(I-R_{p})SR_{p}$ for some $S\in B(\widehat{L^{p}})$. It follows that
$$\{S\in B(\widehat{L^{p}}):S\mathcal{M}\subset\mathcal{M}\}\subset\{\rho_{\mathcal{U}}(\pi(T)):T\in B(L^{p})\}^{-SOT}.\eqno\qedhere$$
\end{proof}
\begin{remark}
For $p<2$, the space $\mathcal{M}$ is the annihilator of the space $\mathcal{M}$ for the conjugate of $p$. In other words, for $p>2$, the space $\mathcal{M}$ is the range of $R_{p}$, whereas for $p<2$, the space $\mathcal{M}$ is the range of $I-R_{p}$.
\end{remark}
\section{Consequences}
The following corollary follows easily from Theorem \ref{main}.
\begin{corollary}\label{commutant}
Let $1<p<\infty$, $p\neq 2$. Let $\mathcal{U}$ be a selective nonprincipal ultrafilter on $\mathbb{N}$. Then the commutant of $\{\rho_{\mathcal{U}}(\pi(T)):T\in B(L^{p})\}$ in $B(\widehat{L^{p}})$ consists of scalar multiples of the identity operator.
\end{corollary}
As mentioned in Section 1, Calkin showed that \cite{Calkin} when $p=2$, the map $\rho_{\mathcal{U}}$ is an isometric $*$-representation so $\{\rho_{\mathcal{U}}(\pi(T)):T\in B(L^{2})\}$ is a $C^{*}$-subalgebra of $B((L^{2})^{\mathcal{U}})$. So by von Neumann's double commutant theorem, the double commutant of $\{\rho_{\mathcal{U}}(\pi(T)):T\in B(L^{2})\}$ in $B(\widehat{L^{2}})$ coincides with the WOT closure of $\{\rho_{\mathcal{U}}(\pi(T)):T\in B(L^{2})\}$.

Assume that $\mathcal{U}$ is selective. For $1<p<\infty$, $p\neq 2$, by Corollary \ref{commutant}, the double commutant of $\{\rho_{\mathcal{U}}(\pi(T)):T\in B(L^{p})\}$ in $B(\widehat{L^{p}})$ is  $B(\widehat{L^{p}})$, whereas by Theorem \ref{main}, $\{\rho_{\mathcal{U}}(\pi(T)):T\in B(L^{p})\}^{-WOT}=\{S\in B(\widehat{L^{p}}):S\mathcal{M}\subset\mathcal{M}\}$. Therefore, the double commutant of $\{\rho_{\mathcal{U}}(\pi(T)):T\in B(L^{p})\}$ in $B(\widehat{L^{p}})$ does not coincide with the WOT closure of $\{\rho_{\mathcal{U}}(\pi(T)):T\in B(L^{p})\}$.

The rest of this section is devoted to proving that the commutant of $B(L^{p})$ in its ultrapower may or may not be trivial depending on the ultrafilter if we assume the continuum hypothesis.
\begin{lemma}\label{rhocomm}
Let $1<p<\infty$, $p\neq 2$. Let $\mathcal{U}$ be a selective nonprincipal ultrafilter on $\mathbb{N}$. Let $P$ be the canonical projection from $(L^{p})^{\mathcal{U}}$ onto $L^{p}$. Then the commutant of $\{T^{\mathcal{U}}:T\in B(L^{p})\}$ in $B((L^{p})^{\mathcal{U}})$ is spanned by $P$ and $I-P$.
\end{lemma}
\begin{proof}
Suppose that $A\in B((L^{p})^{\mathcal{U}})$ commutes with $T^{\mathcal{U}}$ for all $T\in B(L^{p})$. By (\ref{tdecomp}), $T^{\mathcal{U}}=T\oplus\rho_{\mathcal{U}}(\pi(T))$ with respect to the decomposition $(L^{p})^{\mathcal{U}}=L^{p}\oplus\widehat{L^{p}}$. So for every compact operator $K$ on $L^{p}$, we have $K^{\mathcal{U}}=K\oplus 0$. Thus, $A$ commutes with $K\oplus 0$ for every $K\in K(L^{p})$. Since the identity operator on $L^{p}$ is the WOT limit of a sequence of compact operators on $L^{p}$, it follows that $A$ commutes with $I\oplus 0=P$. So we may write
\[A=A_{1}\oplus A_{2}\]
with respect to the decomposition $(L^{p})^{\mathcal{U}}=L^{p}\oplus\widehat{L^{p}}$. Since $A$ commutes with $T^{\mathcal{U}}$ for all $T\in B(L^{p})$, the operator $A_{1}$  is a scalar multiple of $I$, and $A_{2}$ commutes with $\rho_{\mathcal{U}}(\pi(T))$ for all $T\in B(L^{p})$. Thus by Lemma \ref{commutant}, $A_{2}$ is also a scalar multiple of $I$. Therefore, $A$ is in the span of $P$ and $I-P$.
\end{proof}
\begin{lemma}\label{nonrep}
Let $\mathcal{U}$ be a selective nonprincipal ultrafilter on $\mathbb{N}$. Let $\mathcal{X}$ be an infinite dimensional reflexive Banach space. Let $(T_{n})_{n\geq 1}$ be a bounded sequence in $B(\mathcal{X})$. Then
\[(T_{1},T_{2},\ldots)_{\mathcal{U}}\neq I\oplus 0\]
with respect to the decomposition $\mathcal{X}^{\mathcal{U}}=\mathcal{X}\oplus\widehat{\mathcal{X}}$.
\end{lemma}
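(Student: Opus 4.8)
The plan is to argue by contradiction. Suppose that $(T_1,T_2,\ldots)_{\mathcal{U}}=I\oplus 0$ with respect to the decomposition $X^{\mathcal{U}}=X\oplus\widehat{X}$. Unravelling the definition, applying the ultraproduct to a constant sequence $(x)_{n,\mathcal{U}}$ with $x\in X$ shows that $\lim_{n,\mathcal{U}}\|T_nx-x\|=0$ for every $x\in X$, while applying it to any element of $\widehat{X}$ must yield $0$. My goal is to manufacture a single nonzero element of $\widehat{X}$ that the ultraproduct nevertheless fixes, contradicting the second requirement.

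To build such an element, I first note that since $X$ is infinite-dimensional and reflexive it contains a normalized weakly null sequence $(x_k)_{k\ge1}$. For each $k$ I set $A_k=\{n:\|T_nx_k-x_k\|<1/k\}$; by the first requirement each $A_k\in\mathcal{U}$. Here is where selectivity enters: property (1) in the definition of a selective ultrafilter yields a single $A\in\mathcal{U}$ that is almost contained in every $A_k$. Writing $m_k=\max(A\setminus A_k)$ and passing to the nondecreasing majorant $M_k=\max(m_1,\ldots,m_k,k)$, so that $M_k\to\infty$ and $A\cap(M_k,\infty)\subseteq A_k$, I can define for large $n\in A$ an index $k(n)=\max\{k:M_k<n\}$. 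Then $k(n)\to\infty$ as $n\to\infty$ within $A$ and, crucially, $n\in A_{k(n)}$, i.e. $\|T_nx_{k(n)}-x_{k(n)}\|<1/k(n)$.

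Next I set $y_n=x_{k(n)}$ for the relevant $n\in A$ and $y_n=0$ otherwise. Three things then hold. Because $k(n)\to\infty$ along $\mathcal{U}$ and $(x_k)$ is weakly null, $w\hyphen\lim_{n,\mathcal{U}}y_n=0$, so $(y_n)_{n,\mathcal{U}}\in\widehat{X}$. Because $\|y_n\|=1$ on a set in $\mathcal{U}$, we have $(y_n)_{n,\mathcal{U}}\ne0$. And because $\|T_ny_n-y_n\|<1/k(n)\to0$ along $\mathcal{U}$, we get $(T_1,T_2,\ldots)_{\mathcal{U}}(y_n)_{n,\mathcal{U}}=(T_ny_n)_{n,\mathcal{U}}=(y_n)_{n,\mathcal{U}}$. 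Thus the ultraproduct fixes a nonzero vector of $\widehat{X}$, contradicting $(T_1,T_2,\ldots)_{\mathcal{U}}=I\oplus0$.

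I expect the main obstacle to be the diagonalization in the second paragraph: the hypotheses only give, for each \emph{fixed} $x$, that $T_nx\to x$ along $\mathcal{U}$, and there is no reason a generic nonprincipal ultrafilter would allow one to choose a single index function $k(n)\to\infty$ for which the approximation $\|T_nx_{k(n)}-x_{k(n)}\|\to0$ survives. Selectivity is precisely the tool that bridges this gap, collapsing the countable family $(A_k)$ into one set $A\in\mathcal{U}$ lying almost inside all of them and thereby legitimizing the diagonal choice; this is the same mechanism already exploited in Lemma \ref{disjoint} and Lemma \ref{xy}.
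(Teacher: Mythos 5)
Your argument is correct and is essentially the paper's proof: both assume $(T_1,T_2,\ldots)_{\mathcal{U}}=I\oplus 0$, take a normalized weakly null sequence $(x_k)$, form the sets $A_k\in\mathcal{U}$ recording that $T_n$ nearly fixes $x_k$ (the paper uses the weaker consequence $\|T_nx_k\|>1/2$), and use property (1) of selectivity to diagonalize into a nonzero element $(x_{k_n})_{n,\mathcal{U}}$ of $\widehat{X}$ that the ultraproduct fails to annihilate. Your explicit construction of the index function $k(n)$ just spells out the diagonalization the paper states tersely.
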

\begin{proof}
Suppose by contradiction $(T_{1},T_{2},\ldots)_{\mathcal{U}}=I\oplus 0$. Let $(x_{k})_{k\geq 1}$ be a sequence in $\mathcal{X}$ such that $\|x_{k}\|=1$ for all $k\geq 1$ and $x_{k}\to 0$ weakly. Then
\[(T_{n}x_{k})_{n,\mathcal{U}}=(T_{1},T_{2},\ldots)_{\mathcal{U}}(x_{k})_{n,\mathcal{U}}=(x_{k})_{n,\mathcal{U}},\quad k\geq 1.\]
Thus $\displaystyle\lim_{n,\mathcal{U}}\|T_{n}x_{k}\|=1$ for all $k\geq 1$. For each $k\geq 1$, let
\[A_{k}=\{n\in\mathbb{N}:\|T_{n}x_{k}\|>\frac{1}{2}\}\in\mathcal{U}.\]
Since $\mathcal{U}$ is selective, by Lemma \ref{selectivesubseq}, there exists a sequence $(k_{n})_{n\geq 1}$ in $\mathbb{N}$ such that $k_{n}\to\infty$ and
\[\{n\in\mathbb{N}:\|T_{n}x_{k_{n}}\|>\frac{1}{2}\}\in\mathcal{U}.\]
Hence, $(T_{n}x_{k_{n}})_{n,\mathcal{U}}\neq 0$. But since $(x_{k_{n}})_{n,\mathcal{U}}\in\widehat{\mathcal{X}}$ and $(T_{1},T_{2},\ldots)_{\mathcal{U}}=0$ on $\widehat{\mathcal{X}}$, we have
\[(T_{n}x_{k_{n}})_{n,\mathcal{U}}=(T_{1},T_{2},\ldots)_{\mathcal{U}}(x_{k_{n}})_{n,\mathcal{U}}=0.\]
An absurdity follows.
\end{proof}
\begin{corollary}\label{selectivetrivial}
Let $1<p<\infty$, $p\neq 2$. Let $\mathcal{U}$ be a selective nonprincipal ultrafilter on $\mathbb{N}$. Suppose that $(T_{n})_{n\geq 1}$ is a bounded sequence in $B(L^{p})$ satisfying
\[\lim_{n,\mathcal{U}}\|T_{n}T-TT_{n}\|=0.\]
Then there exists a scalar $\lambda$ such that
\[\lim_{n,\mathcal{U}}\|T_{n}-\lambda I\|=0.\]
\end{corollary}
\begin{proof}
Since $(T_{1},T_{2},\ldots)_{\mathcal{U}}$ commutes with $T^{\mathcal{U}}$ for all $T\in B(L^{p})$, by Lemma \ref{rhocomm}, there exist scalars $\lambda_{1},\lambda_{2}$ such that
\[(T_{1},T_{2},\ldots)_{\mathcal{U}}=\lambda_{1}I\oplus\lambda_{2}I\]
with respect to the decomposition $(L^{p})^{\mathcal{U}}=L^{p}\oplus\widehat{L^{p}}$. By linear scaling and Lemma \ref{nonrep}, $\lambda_{1}=\lambda_{2}$. Thus the result follows.
\end{proof}
Recall that if $\mathcal{X}$ is a Banach space, the space of compact operators on $\mathcal{X}$ is denoted by $K(\mathcal{X})$. The following lemma should be well known. Roughly speaking it says that quasicentral approximate units exist for operators on reflexive Banach spaces.
\begin{lemma}[Compare to \cite{Boedihardjo2}, Lemma 2.2]\label{qcau}
Let $\mathcal{X}$ be a reflexive Banach space. Let $(K_{n})_{n\geq 1}$ be a bounded sequence in $K(\mathcal{X})$ converging to $I$ in SOT. Let $A_{1},\ldots,A_{r}\in B(\mathcal{X})$. Then there exists a sequence $(K_{n}')_{n\geq 1}$ in the convex hull of $\{K_{m}:m\geq 1\}$ converging to $I$ in SOT such that $\displaystyle\lim_{n\to\infty}\|K_{n}'A_{i}-A_{i}K_{n}'\|=0$ for all $i=1,\ldots,r$.
\end{lemma}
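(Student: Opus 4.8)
The plan is to exploit that the map $K\mapsto KA_{i}-A_{i}K$ is linear, so that a convex combination $K_{n}'=\sum_{j}\lambda_{j}^{(n)}K_{m_{j}}$ of the $K_{n}$ produces the corresponding convex combination $\sum_{j}\lambda_{j}^{(n)}(K_{m_{j}}A_{i}-A_{i}K_{m_{j}})$ of the commutators. A Mazur-type argument, powered by the compactness of the commutators and the reflexivity of $X$, will then produce convex combinations along which all the commutators vanish \emph{in norm}.

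First I would record two elementary facts about $C_{n}^{(i)}:=K_{n}A_{i}-A_{i}K_{n}$. Since $K(X)$ is a two-sided ideal in $B(X)$, each $C_{n}^{(i)}$ is compact, and the family is uniformly bounded by $2\|A_{i}\|\sup_{m}\|K_{m}\|$. Moreover $C_{n}^{(i)}\to 0$ in SOT: for $x\in X$ we have $C_{n}^{(i)}x=K_{n}(A_{i}x)-A_{i}(K_{n}x)$, and both terms tend to $A_{i}x$ because $K_{n}\to I$ in SOT and $A_{i}$ is continuous.

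The crucial step is to upgrade SOT-nullity to weak nullity in the Banach space $K(X)$. This is where reflexivity enters, through the trace duality identifying $K(X)^{*}$ with the nuclear operators (valid since $X$, being $L^{p}$ in the application, has the approximation property, with reflexivity supplying the Radon--Nikodym property). Concretely, every bounded functional $\nu$ on $K(X)$ may be written as $\langle\nu,T\rangle=\sum_{k}x_{k}^{*}(Ty_{k})$ with $x_{k}^{*}\in X^{*}$, $y_{k}\in X$ and $\sum_{k}\|x_{k}^{*}\|\,\|y_{k}\|<\infty$; here reflexivity is exactly what places the second factor in $X$ rather than $X^{**}$, so that $Ty_{k}$ makes sense and SOT controls it. For each fixed $k$ one has $x_{k}^{*}(C_{n}^{(i)}y_{k})\to 0$ by SOT-nullity, while $|x_{k}^{*}(C_{n}^{(i)}y_{k})|\le(\sup_{n}\|C_{n}^{(i)}\|)\,\|x_{k}^{*}\|\,\|y_{k}\|$ is summable; dominated convergence for series then gives $\langle\nu,C_{n}^{(i)}\rangle\to 0$. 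Hence $C_{n}^{(i)}\to 0$ weakly in $K(X)$ for every $i$.

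To finish, I would bundle the operators: the vectors $v_{n}=(C_{n}^{(1)},\dots,C_{n}^{(r)})$ converge weakly to $0$ in the finite direct sum $Y=K(X)\oplus\cdots\oplus K(X)$. Thus $0$ lies in the weak closure, hence by Mazur's theorem in the norm-closed convex hull, of every tail $\{v_{m}:m\ge N\}$. Choosing $N_{n}\to\infty$, I can therefore pick for each $n$ a convex combination $K_{n}'=\sum_{j}\lambda_{j}^{(n)}K_{m_{j}}$ with all $m_{j}\ge N_{n}$ and $\max_{i}\|\sum_{j}\lambda_{j}^{(n)}C_{m_{j}}^{(i)}\|<1/n$; the \emph{same} weights serve all $i$ precisely because Mazur is applied to the bundled $v_{m}$ in $Y$. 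Then $\|K_{n}'A_{i}-A_{i}K_{n}'\|\to 0$ for each $i$, and since only indices $m_{j}\ge N_{n}$ occur, the estimate $\|K_{n}'x-x\|\le\sum_{j}\lambda_{j}^{(n)}\|K_{m_{j}}x-x\|\to 0$ shows $K_{n}'\to I$ in SOT. The main obstacle is the weak-nullity step: it is what forces the reflexivity hypothesis (to keep the second tensor factor inside $X$) and is the reason one must pass to convex combinations rather than use the $K_{n}$ themselves. Handling $A_{1},\dots,A_{r}$ with a single choice of weights is only a minor point, dispatched by working in $Y$.
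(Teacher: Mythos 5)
The paper does not actually prove this lemma---it is stated as ``well known'' with only a pointer to \cite{Boedihardjo}---so your argument can only be judged on its own terms. Your overall scheme (the commutators $C_{n}^{(i)}=K_{n}A_{i}-A_{i}K_{n}$ are compact, uniformly bounded and SOT-null; upgrade this to weak nullity in $K(X)$; bundle the $r$ commutators into a finite direct sum so that Mazur's theorem produces a \emph{single} system of convex weights; draw the combinations from tails so that SOT-convergence to $I$ survives) is the standard one, and all of those steps are carried out correctly.

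The one genuine gap is the step identifying $K(X)^{*}$ with the nuclear functionals $T\mapsto\sum_{k}x_{k}^{*}(Ty_{k})$. Reflexivity alone does not give this: the trace-duality description of $K(X)^{*}$ requires the approximation property (reflexivity supplies the Radon--Nikod\'ym property, but without AP there are integral forms on $K(X)$ admitting no nuclear representation). You flag this yourself by appealing to ``$X$ being $L^{p}$ in the application,'' but the lemma is stated for an arbitrary reflexive $X$, and the paper genuinely uses that generality: the final theorem of Section 6 applies Lemma \ref{qcau} to an arbitrary separable reflexive space assumed only to have the \emph{bounded compact} approximation property, and such spaces need not have AP. The clean repair is Kalton's theorem on weak convergence in spaces of compact operators (N.~J. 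Kalton, \emph{Spaces of compact operators}, Math. Ann. 208 (1974)): a bounded sequence $(T_{n})$ in $K(X,Y)$ is weakly null if and only if $\langle T_{n}^{*}y^{*},x^{**}\rangle\to 0$ for all $y^{*}\in Y^{*}$ and $x^{**}\in X^{**}$; when $X$ is reflexive this is exactly WOT-nullity, which your sequence $(C_{n}^{(i)})$ has because it is SOT-null. With that substitution the remainder of your proof goes through verbatim.
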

\begin{lemma}\label{flat}
Let $\mathcal{X}$ be a separable reflexive Banach space that has the bounded compact approximation property, i.e., there exists a bounded sequence $(K_{n})_{n\geq 1}$ in $K(\mathcal{X})$ converging to $I$ in SOT. Then there exists an ultrafilter $\mathcal{U}_{0}$ on $\mathbb{N}$ and a bounded sequence $(T_{n})_{n\geq 1}$ in $K(\mathcal{X})$ such that \[\lim_{n,\mathcal{U}}\|T_{n}x-x\|=0,\quad x\in \mathcal{X},\]
and
\[\lim_{n,\mathcal{U}}\|T_{n}T-TT_{n}\|=0,\quad T\in B(\mathcal{X}).\]
\end{lemma}
\begin{proof}
Let $\Lambda$ be the set of all sequence $a=(a_{j})_{j\geq 1}$ of rational numbers in $[0,1]$ such that only finite number of terms are nonzero and $\displaystyle\sum_{j=1}^{\infty}a_{j}=1$. Since $\Lambda$ is countably infinite, we may identify it with $\mathbb{N}$.

For each $a\in\Lambda$, let
\[T_{a}=\sum_{j=1}^{\infty}a_{j}K_{j}.\]
For every $x_{1},\ldots,x_{r}\in \mathcal{X}$, $A_{1},\ldots,A_{r}\in B(\mathcal{X})$ and $\epsilon>0$, the set
\[\{a\in\Lambda:\|T_{a}A_{i}-A_{i}T_{a}\|<\epsilon\text{ and }\|T_{a}x_{i}-x_{i}\|<\epsilon\text{ for all }1\leq i\leq r\}\]
is nonempty by Lemma \ref{qcau}. So these sets form a filter base on $\Lambda$ and thus are contained in an ultrafilter $\mathcal{U}_{0}$ on $\Lambda$. We have
\[\lim_{a,\mathcal{U}}\|T_{a}x-x\|=0,\quad x\in \mathcal{X},\]
and
$$\lim_{a,\mathcal{U}}\|T_{a}A-AT_{a}\|=0,\quad A\in B(\mathcal{X}).\eqno\qedhere$$
\end{proof}
\begin{remark}
The operators $K_{n}$ in Lemma \ref{flat} can be chosen to have norm 1 since every reflexive Banach space with the compact approximation property has the compact metric approximation property \cite[Proposition 1]{Cho}.
\end{remark}
\begin{corollary}\label{extension}
Let $1<p<\infty$. Let $\mathcal{U}$ be a nonprincipal ultrafilter on $\mathbb{N}$. Let $l^{\infty}(B(L^{p}))$ be the Banach algebra of bounded functions from $\mathbb{N}$ into $B(L^{p})$. Let $B(L^{p})^{\mathcal{U}}$ be the quotient of $l^{\infty}(B(L^{p}))$ by the ideal
\[c_{\mathcal{U}}(B(L^{p}))=\{(T_{1},T_{2},\ldots)\in l^{\infty}(B(L^{p})):\lim_{n,\mathcal{U}}\|T_{n}\|=0\}.\]
We may identify $B(L^{p})$ as a subalgebra of $B(L^{p})^{\mathcal{U}}$ via the map $T\mapsto(T,T,\ldots)+c_{\mathcal{U}}(B(L^{p}))$. Then the commutant of $B(L^{p})$ in $B(L^{p})^{\mathcal{U}}$ is trivial if $\mathcal{U}$ is selective; and there exists a nonprincipal ultrafilter $\mathcal{V}$ on $\mathbb{N}$ such that the commutant of $B(L^{p})$ in $B(L^{p})^{\mathcal{V}}$ is nontrivial.
\end{corollary}
\begin{proof}
For $1<p<\infty$, $p\neq 2$, the first assertion follows from Corollary \ref{selectivetrivial}, while the second assertion follows from Lemma \ref{flat}. For $p=2$, this was proved in \cite{Farah}.
\end{proof}
\section{Open problems}
\begin{problem}
Let $1<p<\infty$. Is $\{\rho_{\mathcal{U}}(\pi(T)):T\in B(L^{p})\}^{-WOT}$ always a reflexive operator algebra, i.e., if $S\in B((L^{p})^{\mathcal{U}})$ and $S\mathcal{N}\subset\mathcal{N}$ for all subspace $\mathcal{N}$ of $(L^{p})^{\mathcal{U}}$ that is invariant under $\rho_{\mathcal{U}}(\pi(T))$ for all $T\in B(L^{p})$, does $S$ necessarily have to be in $\{\rho_{\mathcal{U}}(\pi(T)):T\in B(L^{p})\}^{-WOT}$?
\end{problem}
Theorem \ref{main} gives an affirmative answer when $\mathcal{U}$ is selective. We also have an affirmative answer when $p=2$ and the scalar field is $\mathbb{C}$, since all von Neumann algebras are reflexive.
\begin{problem}
Let $1<p<\infty$. Assume that $\mathcal{U}$ is selective. Let $S\in\{\rho_{\mathcal{U}}(\pi(T)):T\in B(L^{p})\}^{-WOT}$. Does there exist $r>0$ such that $S\in\{\rho_{\mathcal{U}}(\pi(T)):T\in B(L^{p}),\,\|T\|\leq r\}^{-WOT}$?
\end{problem}
When $p=2$ and the scalar field is $\mathbb{C}$, we have an affirmative answer by Kaplansky density Theorem. But using the techniques in Section 4, it is not hard to see that we also have an affirmative answer when $p=2$ and the scalar field is $\mathbb{R}$.
\begin{problem}
Let $1<p<\infty$, $p\neq 2$. Characterize the operators $T\in B(L^{p})$ such that $T^{\mathcal{U}}$ commutes with $R_{p}$.
\end{problem}


\begin{thebibliography}{00}
\bibitem{Albiac} F. Albiac and N.~J. Kalton, {\it Topics in Banach space theory}, Springer, New York, 2006.
\bibitem{Boedihardjo} M.~T. Boedihardjo and W.~B. Johnson, {\it On mean ergodic convergence in the Calkin algebras}, Proc. Amer. Math. Soc. {\bf 143} (2015), 2451-2457.
\bibitem{Boedihardjo2} M.~T. Boedihardjo, {\it Approximate similarity of operators on $l^{p}$}, https://arxiv.org/pdf/1706.08582v1.pdf
\bibitem{Calkin} J.~W. Calkin, {\it Two-sided ideals and congruences in the ring of bounded operators in Hilbert space}, Ann. of Math. {\bf 42} (1941), 839-873.
\bibitem{Cho} C.-M. Cho and W.~B. Johnson, {\it A Characterization of Subspaces $\mathcal{X}$ of $l_{p}$ for which $K(\mathcal{X})$ is an $M$-Ideal in $L(\mathcal{X})$}, Proc. Amer. Math. Soc. {\bf 93} (1985), 466-470.
\bibitem{Diestel} J. Diestel, H. Jarchow and A. Tonge, {\it Absolutely Summing Operators}, Cambridge University Press, Cambridge, 1995.
\bibitem{Farah} I. Farah, N.~C. Phillips, J. Stepr\=ans, {\it The commutant of $L(H)$ in its ultrapower may or may not be trivial}, Math. Ann. {\bf 347} (2010), 839-857.
\bibitem{Kadec} M.~I. Kadec and A. Pe\l{}czy\'nski, {\it Bases, lacunary sequences and complemented subspaces in the spaces $L_{p}$}, Studia Math. {\bf 21} (1962), 161-176.
\bibitem{Lindenstrauss} J. Lindenstrauss and L. Tzafriri, {\it Classical Banach spaces II}, Springer-Verlag, Berlin, 1979.
\bibitem{Reid} G.~A. Reid, {\it On the Calkin representations}, Proc. London Math. Soc. {\bf 23} (1971), 547-564.
\bibitem{Stern} J. Stern, {\it Ultrapowers and local properties of Banach spaces}, Trans. Amer. Math. Soc. {\bf 240} (1978) 231-252.
\end{thebibliography}
\end{document}